\documentclass{amsart}

\usepackage{amsmath, amssymb}
\input xy
\xyoption{all}
\begin{document}

\newtheorem{innercustomthm}{Theorem}
\newenvironment{customthm}[1]
  {\renewcommand\theinnercustomthm{#1}\innercustomthm}
  {\endinnercustomthm}
  
\newtheorem{theorem}{Theorem}[section]
\newtheorem{proposition}[theorem]{Proposition}
\newtheorem{lemma}[theorem]{Lemma}
\newtheorem{corollary}[theorem]{Corollary}
\newtheorem{fact}[theorem]{Fact}

\theoremstyle{definition}
\newtheorem{definition}[theorem]{Definition}
\newtheorem{conjecture}[theorem]{Conjecture}
\newtheorem{notation}[theorem]{Notation}

\theoremstyle{remark}
\newtheorem{remark}[theorem]{Remark}
\newtheorem{example}[theorem]{Example}
\newtheorem{question}[theorem]{Question}

\numberwithin{equation}{section}

\def\id{\operatorname{id}}
\def\height{\operatorname{ht}}
\def\alg{\operatorname{alg}}
\def\Frac{\operatorname{Frac}}
\def\Const{\operatorname{Const}}
\def\spec{\operatorname{Spec}}
\def\span{\operatorname{span}}
\def\exc{\operatorname{Exc}}
\def\Div{\operatorname{Div}}
\def\cl{\operatorname{cl}}
\def\mer{\operatorname{mer}}
\def\trdeg{\operatorname{trdeg}}
\def\ord{\operatorname{ord}}
\def\rank{\operatorname{rank}}
\def\loc{\operatorname{loc}}
\def\kloc{\operatorname{K-loc}}
\def\acl{\operatorname{acl}}
\def\dcl{\operatorname{dcl}}
\def\tp{\operatorname{tp}}
\def\dcf{\operatorname{DCF}_0}
\def\CC{\mathbb C}
\def\C{\mathcal C}
\def\U{\mathcal U}
\def\PP{\mathbb P}


\def\Ind#1#2{#1\setbox0=\hbox{$#1x$}\kern\wd0\hbox to 0pt{\hss$#1\mid$\hss}
\lower.9\ht0\hbox to 0pt{\hss$#1\smile$\hss}\kern\wd0}
\def\ind{\mathop{\mathpalette\Ind{}}}
\def\Notind#1#2{#1\setbox0=\hbox{$#1x$}\kern\wd0\hbox to 0pt{\mathchardef
\nn=12854\hss$#1\nn$\kern1.4\wd0\hss}\hbox to
0pt{\hss$#1\mid$\hss}\lower.9\ht0 \hbox to
0pt{\hss$#1\smile$\hss}\kern\wd0}
\def\nind{\mathop{\mathpalette\Notind{}}}

\title[Model theory and the DME]{Model theory and the DME: a survey}

\author{Rahim Moosa}
\address{Department of Pure Mathematics, University of Waterloo, Ontario, Canada}
\email{rmoosa@uwaterloo.ca}
\thanks{R. Moosa was supported by an NSERC Discover Grant and Discovery Accelerator Supplement.}
\date{\today}

\keywords{$D$-groups, $D$-varieties, differentially closed fields, Dixmier-Moeglin equivalence, Hopf algebra, internality, Jouanolou's theorem, Poisson algebra}
\subjclass[2010]{Primary: 03C98, 12H05; Secondary: 17B63, 16S36}
\begin{abstract}
Recent work using the model theory of differentially closed fields to answer questions having to do with the Dixmier-Moeglin equivalence for (noncommutatve) finitely generated noetherian algebras, and for (commutative) finitely generated Poisson algebras, is here surveyed, with an emphasis on the model-theoretic and differential-algebraic-geometric antecedents.
\end{abstract}

\maketitle

\setcounter{tocdepth}{1}
\tableofcontents

\section{Introduction}

In the last five years a small body of work has arisen around a certain application of model theory to algebra.
It appears in the papers~\cite{BLLSM, BLSM, LLS, isolated} authored by, in various combinations, Jason Bell, St\'ephane Launois, Omar Le\'on S\'anchez, and myself.
It is my intention in this article to survey that work, but also to trace its model-theoretic roots and to discuss other related recent developments.

The model theory involved has to do with the structure of finite rank definable sets in differentially closed fields of characteristic zero, and revolves around the stability-theoretic notions of internality and orthogonality.
However, in the context of differentially closed fields, the necessary stability-theoretic ideas have almost entirely algebro-geometric characterisations, and my approach here will be to express things as much as possible in this way.
I hope the survey will thereby be more generally accessible (though I may be trading in stability-theoretic prerequisites for algebro-geometric ones).

The algebra to which this model theory is being applied is somewhat surprising: it arises from the work of Dixmier and Moeglin in the nineteen seventies on the representation theory of noncommutative algebras.
(The ``DME" of the title stands for ``Dixmier-Moeglin equivalence".)
The specific problems are explained in some detail in the last section, where the main results obtained are stated explicitly with proof outlines sketched.
Let me only say now that they have to do with the structure of prime ideals in noncommutative finitely generated noetherian algebras on the one hand, and the structure of prime Poisson ideals in commutative finitely generated Poisson algebras on the other.

I will spend much of this survey narrating a particular line of work in the model theory of differentially closed fields that begins with Hrushovski's interpretation, in the nineteen nineties, of a theorem of Jouanolou on algebraic foliations.
This is a finiteness theorem on codimension one subsets of certain differential-algebraic varieties.
I will give several variations and generalisations of this theorem, and then explain how certain well-studied constructions in differential-algebraic geometry, like that of the Manin kernel, show that the theorem cannot in general be extended to higher codimension.
However, under the additional constraint of ``analysability to the constants", a higher codimension extension of the Jouanolou-type theorems does hold.
In particular, I will explain why it holds for finite rank definable groups over the constants.
These results, both positive and negative, will then be applied to the Dixmier-Moeglin equivalence problem.

Two asides that take us away from differentially closed fields are included; one in the abstract setting of finite rank types in stable theories, and the other on an analogue of these ideas in the {\em difference}-algebraic setting.

\subsection{Acknowledgements}
This survey is based on a pair of talks I gave on the subject; one during the {\em Model theory and applications} workshop at the Institut Henri Poincar\'e in 2018, and the other at the workshop on {\em Interactions between representation theory and model theory} at the University of Kent in 2019.
I thank both institutions, and the programme organisers, for their hospitality.
I also thank Jason Bell for many useful discussions.

\bigskip
\section{Differential-algebraic preliminaries}

\noindent
A certain amount of familiarity with differentially closed fields and differential-algebraic geometry will be necessary, and the preliminaries here are intended to recall the relevant notions.
In particular, we discuss $D$-varieties.
We encourage the reader with even a small amount of exposure to the subject to skip to the next section, and only look back when she comes across something unfamiliar.

All the fields in this paper will be of characteristic zero.

By a {\em derivation} we mean an additive operator that satisfies the Leibniz rule $\delta(xy)=x\delta(y)+\delta(x)y$.

Fix a differential field $(K,\delta)$.
For each $n>0$, the derivation induces on $K^n$ a topology that is finer than the Zariski topology, called the {\em Kolchin topology}.
Its closed sets are the zero sets of {\em differential-polynomials}, that is, expressions of the form $P(x,\delta x,\dots,\delta^\ell x)$ where $x=(x_1,\dots,x_n)$, $\delta^ix=(\delta^ix_1,\dots,\delta^ix_n)$, and $P$ is an ordinary polynomial over $K$ in $(\ell+1)n$ variables.
A {\em differential-rational} function is then a ratio of differential-polynomials.

The geometry of Kolchin closed sets is only made manifest when we work in an existentially closed differential field, that is, a {\em differentially closed} field $(K,\delta)$.
This means that any finite system of differential-polynomial equations and inequations over $K$ that has a solution in some differential field extension of $(K,\delta)$, already has a solution in $(K,\delta)$.
In particular, $K$ is algebraically closed, as is its {\em field of constants} $K^\delta:=\{a\in K:\delta a=0\}$.
The class of differentially closed fields is axiomatisable in the language $\{0,1,+,-,\times,\delta\}$ of differential rings, and the corresponding theory is denoted by~$\dcf$.
It is an $\omega$-stable theory that admits quantifier elimination and the elimination of imaginaries.
It serves as the appropriate theory in which to study differential-algebraic geometry.

The Kolchin topology is noetherian and we have, therefore, the attendant notion of {\em irreducibility}.
Moreover, to an irreducible Kolchin closed set over a differential subfield~$k$, we can associate the {\em generic type} $p(x)$ over $k$ which asserts that $x\in X$ but $x\notin Y$ for every proper Kolchin closed subset $Y\subset X$ over $k$.

The Kolchin closed sets we will be mostly interested in will arise in a very particular way from algebraic varieties equipped with a ``twisted vector field".
Fix a differential subfield $k$ of parameters.
If $V\subseteq K^n$ is an irreducible affine algebraic variety over $k$, then the {\em prolongation} of $V$ is the algebraic variety $\tau V\subseteq K^{2n}$ over~$k$ whose defining equations are
\begin{eqnarray*}
P(x_1,\dots,x_n)&=&0\\
P^\delta(x_1,\dots,x_n)+\sum_{i=1}^n\frac{\partial P}{\partial x_i}(x_1,\dots,x_n)\cdot y_i&=&0
\end{eqnarray*}
for each $P\in I(V)\subseteq k[x_1,\dots,x_n]$.
Here $P^\delta$ denotes the polynomial obtained by applying $\delta$ to the coefficients of $P$.
The projection onto the first $n$ coordinates gives us a surjective morphism $\pi:\tau V\to V$.

Note that if $\delta=0$ on $k$, that is, $V$ is defined over $K^\delta$, then the equations for the prolongation reduce to the equations for the tangent bundle $TV$.
In general, $\tau V$ will be a torsor for $TV$; for each $a\in V$ the fibre $\tau_a V$ is an affine translate of the tangent space $T_aV$.

Taking prolongations is a functor which acts on morphisms by acting on their graphs.
Moreover, the prolongation construction extends to abstract varieties by patching over an affine cover in a natural and canonical way.

The following notion extends that of an algebraic vector field:

\begin{definition}
\label{defn-dvar}
A {\em $D$-variety over $k$} is a pair $(V,s)$ where $V$ is an irreducible algebraic variety over $k$ and $s:V\to\tau V$ is a regular section to the prolongation  defined over $k$.
A {\em $D$-subvariety} of $(V,s)$ is then a $D$-variety $(W,t)$ where $W$ is a closed subvariety of $V$ and $t=s|_W$.
\end{definition}

Let $k[V]$ be the co-ordinate ring of an irreducible affine variety $V$ over $k$.
Then the possible affine $D$-variety structures on $V$ correspond bijectively to the extensions of $\delta$ to a derivation on $k[V]$.
Indeed, given $s:V\to\tau V$, write $s(x)=\big(x,s_1(x),\dots,s_n(x)\big)$ in variables $x=(x_1,\dots,x_n)$.
There is a unique derivation on the polynomial ring $k[x]$ that extends $\delta$ and takes $x_i\to s_i(x)$.
The fact that $s$ maps $V$ to $\tau V$ will imply that this induces a derivation on $k[V]=k[x]/I(V)$.
Conversely suppose we have an extension of $\delta$ to a derivation on $k[V]$, which we will also denote by $\delta$.
Then we can write $\delta\big(x_i+I(V)\big)=s_i(x)+I(V)$ for some polynomials $s_1,\dots,s_n\in k[x]$.
The fact that $\delta$ is a derivation on $k[V]$ extending that on $k$ will imply that $s=(\id,s_1,\dots,s_n)$ is a regular section to $\pi:\tau V\to V$.
It is not hard to verify that these correspondences are inverses of each other.
Moreover, the usual correspondence between subvarieties of $V$ defined over $k$ and prime ideals of $k[V]$, restricts to a correspondence between the $D$-subvarieties of $(V,s)$ defined over $k$ and the prime {\em differential ideals} of $k[V]$, that is, the prime ideals that are closed under the action of $\delta$.

Suppose, now, that $(V,s)$ is a $D$-variety over $k$.
The equations defining the prolongation are such that if $a\in V(K)$ then $\nabla(a):=(a,\delta a)\in\tau V(K)$.
Consider, therefore, the Kolchin closed set
$$(V,s)^\sharp(K):=\{a\in V(K):s(a)=\nabla(a)\}.$$
To say that $s(a)=\nabla(a)$ is to say, writing $s=(\id,s_1,\dots,s_n)$ in an affine chart, that $\delta a_i=s_i(a)$ for all $i=1,\dots,n$.
That is,  $(V,s)^\sharp(K)$ is a Kolchin closed set defined by order $1$ differential-polynomial equations.

The reason that the Kolchin closed sets $(V,s)^\sharp(K)$ are of particular importance is that every finite-dimensional Kolchin closed set is, up to differential-rational bijection, of this form.
Here, a Kolchin closed set $X$ over $k$ is {\em finite-dimensional} if for every $a\in X$ the transcendence degree of $k(a,\delta a,\delta^2a,\dots)$ over $k$ is finite.

Finally, we will be concerned at times with the group objects in this category.
Suppose $(G,s)$ is a $D$-variety where $G$ happens to be an algebraic group.
Note that by the functoriality of $\tau$, and the fact that $\tau(G\times G)=\tau G\times\tau G$, there is an algebraic group structure on $\tau G$ induced by that on $G$.
We say that $(G,s)$ is a {\em $D$-group} if $s:G\to\tau G$ is a homomorphism of algebraic groups.
In that case, $(G,s)^\sharp(K)$ will be a definable group in $(K,\delta)$, and in fact every finite-dimensional definable group will be, up to definable isomorphism, of this form.

\bigskip
\section{Jouanalou-type theorems}
\label{sect-jou}

\noindent
Extending and interpreting a theorem of Jouanalou~\cite{jouanolou} on algebraic foliations, Hrushovski proved, in the unpublished manuscript~\cite{hrushovski-jouanolou} dating from the mid nineteen nineties, the following striking fact in differential-algebraic geometry.

\begin{theorem}[Hrushovski]
\label{udijan}
Working in a saturated model $(K,\delta)$ of $\dcf$ with constant field $\C$, suppose that $X\subseteq K^n$ is an irreducible Kolchin closed subset over a subfield $k\subseteq\C$.
If $X$ admits no nonconstant differential rational functions to $\C$ over $k$ then it has only finitely many differential hypersurfaces over $k$.\footnote{For a published proof of this theorem see~\cite[Theorem~5.7]{freitag-moosa} where it is also generalised to several commuting derivations and to the case when the parameters are possibly nonconstant.}
\end{theorem}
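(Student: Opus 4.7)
The plan is to reduce to the case of a $D$-variety and then execute a Jouanolou-style cofactor argument, invoking Picard-scheme finiteness at the end to convert a linear-algebra dependence into an actual differential-rational first integral.

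\textbf{Step 1 (Reduction to a $D$-variety).} The orthogonality hypothesis forces $X$ to be finite-dimensional: if the generic type of $X$ had infinite differential transcendence degree, a decomposition would produce a nonconstant map to $\C$. Hence, by the fact recalled in the preliminaries, after a differential-rational bijection I may replace $X$ by $(V,s)^\sharp$ for an affine irreducible $D$-variety $(V,s)$ over $k$. Differential hypersurfaces of $X$ now correspond to codimension-one $D$-subvarieties of $(V,s)$, equivalently to principal prime $\delta$-ideals $(f)\subseteq k[V]$; for each such irreducible $f$ one has $\delta f = g_f\cdot f$ for a unique ``cofactor'' $g_f \in k[V]$.

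\textbf{Step 2 (Cofactors lie in a fixed finite-dimensional space).} Fix a smooth projective completion $\bar V$ of $V$. The derivation on $k(V)$ induced by $s$ extends to a rational vector field on $\bar V$ whose pole divisor is contained in some fixed divisor $D$, built from the singularities of $s$ and the boundary $\bar V \setminus V$. Viewing $f$ as a rational function on $\bar V$, the cofactor $g_f = \delta f / f$ has polar divisor bounded above by $D$, independently of $f$. Thus all cofactors lie in the finite-dimensional $k$-vector space $H^0(\bar V,\mathcal O(D))$.

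\textbf{Step 3 (Producing a first integral).} If there were infinitely many $D$-hypersurfaces $W_1,W_2,\ldots$ with defining equations $f_i$ and cofactors $g_i$, Step~2 would yield a nontrivial $\C$-linear relation $\sum_{i=1}^N c_i g_i = 0$. Formally this reads $\delta\big(\prod f_i^{c_i}\big) = 0$, but the $c_i \in \C$ are not integers, so $\prod f_i^{c_i}$ is not literally a rational function. To make genuine sense of the relation, I would invoke finiteness of the N\'eron--Severi group of $\bar V$ to pass to a subsequence of $W_i$'s sharing a common divisor class; the ratios $f_i/f_j$ then define points in the abelian variety $\mathrm{Pic}^0(\bar V)$, and the cofactor relation descends to a relation in a $D$-group quotient, ultimately producing a nonconstant differential-rational map $X \to \C$ and contradicting the hypothesis.

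\textbf{Main obstacle.} Step~3 is the crux: converting a $\C$-linear dependence among cofactors into an \emph{actual} nonconstant differential-rational function to $\C$ is not algebraically innocuous, since ``fractional'' $\C$-powers are not allowed. The Picard-scheme / N\'eron--Severi finiteness, together with the $D$-group structure on $\mathrm{Pic}^0(\bar V)$ induced by $s$, is precisely what one uses to route the argument around this problem. This is the technical heart of Jouanolou's original theorem and of Hrushovski's differential-algebraic generalisation.
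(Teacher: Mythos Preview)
The paper does not actually prove this theorem; it is stated as Hrushovski's result, with the footnote pointing to~\cite{freitag-moosa} for a published proof. So there is no in-paper argument to compare against directly. Your overall strategy---the Darboux--Jouanolou cofactor argument culminating in a Picard/N\'eron--Severi finiteness step---is indeed the method behind Hrushovski's proof and the one carried out in~\cite{freitag-moosa}.

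That said, Step~1 contains a genuine error. Weak orthogonality to $\C$ does \emph{not} force $X$ to be finite-dimensional. For a concrete instance, take $X=K$ itself: the generic type is that of a differential transcendental, and the differential rational function field $k\langle x\rangle$ has constant field~$k$, so there is no nonconstant differential-rational map to~$\C$; yet $X$ is infinite-dimensional. Your asserted ``decomposition producing a nonconstant map to $\C$'' from infinite differential transcendence degree does not exist. The paper itself confirms this reading: in the paragraph immediately following the theorem it explicitly treats the passage to finite-dimensional $X$ as a \emph{restriction} made for the intended application, not as a consequence of the hypotheses. The published proof in~\cite{freitag-moosa} does not reduce to a $D$-variety; it works directly with the prolongation sequence of~$X$ and bounds the cofactors there.

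A secondary issue: even when $X$ is finite-dimensional and you pass to $(V,s)$, codimension-one $D$-subvarieties need not correspond to \emph{principal} prime $\delta$-ideals unless $k[V]$ is factorial, so that identification requires further justification (e.g.\ normalising or resolving). Your Steps~2 and~3 correctly capture the shape of the argument and correctly isolate the real difficulty---turning a $\C$-linear relation among cofactors into an honest rational first integral---together with the right tool for resolving it.
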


Here, by a {\em differential hypersurface} we mean an irreducible Kolchin closed $Y\subset X$ such that for generic $x\in X$ and $y\in Y$,
$$\trdeg_kk(y,\delta y,\dots,\delta^ty)=\trdeg_kk(x,\delta x,\dots,\delta^tx)-1$$
for all sufficiently large $t$.

The model-theoretic content here, besides the fact that one works in $\dcf$ as the ambient theory in which to do differential-algebraic geometry, is that $X$ admitting no nonconstant differential rational functions to $\C$ over $k$ is equivalent to the generic type of $X$ over $k$ being {\em weakly orthogonal} to the constants: every realisation of this type is independent over $k$ from any finite tuple of constants.

What is required for the intended application to the Dixmier-Moeglin equivalence problem, however, is a slight variant of Hrushovski's theorem.
First, we are only interested in finite-dimensional Kolchin closed sets.
Therefore, we can restrict attention to those $X$ of the form $(V,s)^\sharp(K)$ for some affine $D$-variety $(V,s)$ over~$k$.
The assumption in Hrushovski's theorem of no nonconstant differential rational functions to the constants translates to the $D$-variety being ``$\delta$-rational" in the following terminology of~\cite{BLSM}.

\begin{definition}
\label{deltarational}
An affine $D$-variety $(V,s)$ over $k$ is called {\em $\delta$-rational} if the constant field of the induced derivation on $k(V)$ is $k$.
\end{definition}

The conclusion of Hrushovski's theorem, that of having only finitely many differential hypersurfaces, becomes the statement that $(V,s)$ has only finitely many $D$-subvarieties over $k$ of codimension one, or expressed algebraically, that $(k[V],\delta)$ has only finitely many height one prime differential ideals.
So in this finite-dimensional setting Hrushovski's theorem becomes a rather concrete statement in differential algebra.
On the other hand, the intended application does require the more general context of several (possibly noncommuting) derivations.
Such a version appears in~\cite[Theorem~6.1]{BLLSM} as follows:

\begin{theorem}[Bell, Launois, Le\'on S\'anchez, Moosa]
\label{bllsm}
Suppose $A$ is a finitely generated integral $k$-algebra, where $k$ is an algebraically closed field of characteristic zero, and $\delta_1,\dots,\delta_m$ are $k$-linear derivations on $A$.
If $(A,\delta_1,\dots,\delta_m)$ has infinitely many height one prime differential ideals then there is some $f\in\Frac(A)\setminus k$ with $\delta_i(f)=0$ for all $i=1,\dots,m$.
\end{theorem}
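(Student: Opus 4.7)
The plan is to translate the statement into the geometry of $D$-varieties and reduce to Theorem~\ref{udijan} by induction on~$m$.

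\emph{Base case $m=1$.} Let $V=\spec A$, an irreducible affine variety over $k$. Via the correspondence after Definition~\ref{defn-dvar}, the $k$-linear derivation $\delta_1$ on $A=k[V]$ is encoded by a regular section $s:V\to\tau V=TV$---the prolongation collapses to the tangent bundle because $\delta_1$ vanishes on $k$---giving an affine $D$-variety $(V,s)$ over $k$. Under this correspondence, the height one prime $\delta_1$-ideals of $A$ become $D$-subvarieties of $(V,s)$ of codimension one, that is, differential hypersurfaces of the Kolchin closed set $(V,s)^\sharp$; and elements of $\Frac(A)^{\delta_1}\setminus k$ correspond to nonconstant differential rational maps $(V,s)^\sharp\to\C$. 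Embedding $(A,\delta_1)$ into a saturated $(K,\delta)\models\dcf$ whose constants contain $k$, the statement is exactly the contrapositive of Theorem~\ref{udijan}.

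\emph{Inductive step $m\geq 2$, commuting case.} Each of the given height one prime $(\delta_1,\dots,\delta_m)$-ideals of $A$ is in particular a $\delta_m$-ideal, so the base case forces $F:=\Frac(A)^{\delta_m}$ to be strictly larger than $k$. Because the derivations commute, $F$ is stable under $\delta_1,\dots,\delta_{m-1}$, and one can then choose a finitely generated $k$-subalgebra $B\subseteq F$ that is $(\delta_1,\dots,\delta_{m-1})$-stable and that captures the contractions $\mathfrak p\cap B$ of the given primes as distinct height one primes of $B$ (a Zariski-type descent, localising where necessary). The inductive hypothesis applied to $(B,\delta_1,\dots,\delta_{m-1})$ then produces $f\in\Frac(B)\subseteq F\setminus k$ killed by $\delta_1,\dots,\delta_{m-1}$; as $f\in F$ it is also killed by $\delta_m$.

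The \emph{main obstacle} is the noncommuting case, where $F$ need not be $\delta_i$-stable and the induction as written collapses. I would handle this by first replacing $\{\delta_1,\dots,\delta_m\}$ with the $k$-Lie subalgebra $\mathfrak g\subseteq\operatorname{Der}_k(A)$ it generates---this does not alter which ideals are differential---and then inducting on a suitable invariant of $\mathfrak g$, using Lie ideals of $\mathfrak g$ whose constant subfields in $\Frac(A)$ are genuinely $\mathfrak g$-stable and so permit the descent carried out above. A secondary technical concern, also present in the commuting case, is verifying that the contracted family in $B$ really is infinite and that $B$ can be chosen finitely generated and stable under the remaining derivations; this requires a noetherian bookkeeping argument keeping track of a fixed set of generators together with bounded iterates under $\delta_1,\dots,\delta_{m-1}$.
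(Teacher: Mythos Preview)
The paper does not give a proof of Theorem~\ref{bllsm}; it records that there are two in the literature --- an ``entirely algebraic'' one in~\cite{BLLSM}, and one in~\cite{freitag-moosa} that proceeds by proving an extension of Jouanolou's theorem directly in the partial (several commuting derivations) setting --- and it explains how Theorem~\ref{bmt} recovers the statement via the dual-numbers construction $R=A[\epsilon_1,\dots,\epsilon_m]/(\epsilon_1,\dots,\epsilon_m)^2$. Your base case $m=1$ is correct and is exactly what the paper says immediately before Theorem~\ref{bllsm}: Hrushovski's Theorem~\ref{udijan}, restricted to $(V,s)^\sharp$, becomes the statement that $(k[V],\delta)$ has only finitely many height-one prime differential ideals when the $\delta$-constants of $k(V)$ are~$k$.

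The inductive step, however, has a genuine gap, already in the commuting case. You produce $F=\Frac(A)^{\delta_m}\supsetneq k$ and propose to find a finitely generated $(\delta_1,\dots,\delta_{m-1})$-stable $k$-subalgebra $B\subseteq F$ that ``captures the contractions $\mathfrak p\cap B$ of the given primes as distinct height one primes of $B$''. But $B$ lives in $F\subseteq\Frac(A)$, not in $A$, so $\mathfrak p\cap B$ is not even defined as written; and once one arranges $B\subseteq A[1/a]$ for some $a\in A$ and contracts along $B\hookrightarrow A[1/a]$, there is no reason the contractions should be nonzero, let alone height one. Geometrically, letting $W=\spec B$ and $\pi:V\dasharrow W$ the dominant map corresponding to $F\hookrightarrow\Frac(A)$, a $\delta_m$-invariant hypersurface $H\subset V$ may perfectly well dominate $W$ (so that its contraction to $B$ is zero) rather than map onto a hypersurface of $W$; nothing in your argument rules out that \emph{all} of the infinitely many $H_\alpha$ behave this way. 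The phrase ``a Zariski-type descent, localising where necessary'' hides exactly this difficulty, and your ``noetherian bookkeeping'' remark does not address it either, since the problem is not finiteness of generators but the possible collapse of codimension under $\pi$.

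This is why neither of the proofs the paper cites attempts an induction on~$m$: the algebraic proof in~\cite{BLLSM} works with all the derivations at once, and the model-theoretic proof in~\cite{freitag-moosa} establishes the Jouanolou-type finiteness directly for partial differential fields (as noted in the footnote to Theorem~\ref{udijan}) rather than reducing to the ordinary case. For the noncommuting case your plan is still more schematic; passing to the Lie algebra $\mathfrak g$ generated by the $\delta_i$ indeed leaves the class of differential ideals and the common constants unchanged, but it does not by itself produce the $\mathfrak g$-stable intermediate constant fields your induction would require.
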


There are now two rather distinct proofs of this in the literature: the one appearing in~\cite{BLLSM} is entirely algebraic while another following the method of Hrushovski (so relying on an extension of Jouanolou's theorem) appeared later in~\cite[Theorem~4.2]{freitag-moosa}, where it is generalised to the infinite-dimensional setting.

It turns out that the phenomena exhibited by the above theorems is not specific to differential-algebraic geometry.
A difference-algebraic analogue was proved in 2010 (independently) by Bell-Rogalski-Sierra~\cite{BRS} and Cantat~\cite{cantat}.
We will say more about that in~$\S$\ref{sect-diff} below.
More recently it has been shown that such finiteness theorems are truly ubiquitous, and a very general unifying version that appears in~\cite{invariant} is as follows:

\begin{theorem}[Bell, Moosa, Topaz]
\label{bmt}
\label{main}
Let $X$ be an algebraic variety, $Z$ an irreducible algebraic scheme of finite type, and $\phi_1,\phi_2:Z\to X$ rational maps whose restrictions to $Z_{\operatorname{red}}$ are dominant, all over an algebraically closed field $k$ of characteristic zero.
Suppose there exist nonempty Zariski open subsets $V\subseteq Z$ and $U\subseteq X$ such that the restrictions $\phi_1^V,\phi^V_2:V\to U$ are dominant regular morphisms, and there exist infinitely many algebraic hypersurfaces $H$ on $U$ satisfying
$$(\phi_1^V)^{-1}(H)=(\phi^V_2)^{-1}(H).$$
Then there exists $g\in k(X)\setminus k$ such that $g\phi_1=g\phi_2$.
\end{theorem}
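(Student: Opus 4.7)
The plan is to extract the required rational function $g$ from the geometry of divisor classes on a projective completion, following the pattern of the Bell--Rogalski--Sierra and Cantat theorems for invariant hypersurfaces under birational self-maps. First I would pass to smooth projective completions $\bar{X}\supseteq U$ and $\bar{Z}\supseteq V$, resolving indeterminacies so that $\phi_1,\phi_2$ extend to regular morphisms $\bar\phi_1,\bar\phi_2:\bar Z\to\bar X$; one may replace $Z$ by $Z_{\mathrm{red}}$ throughout since both the hypothesis and the conclusion descend. For each invariant hypersurface $H_i$, its Zariski closure $\bar H_i$ in $\bar X$ is an irreducible divisor, and the hypothesis $(\phi_1^V)^{-1}(H_i)=(\phi_2^V)^{-1}(H_i)$ forces the divisor $\bar\phi_1^*\bar H_i-\bar\phi_2^*\bar H_i$ to be supported on $\bar Z\setminus V$, which has only finitely many irreducible components.

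Setting $\Delta:=\bar\phi_1^*-\bar\phi_2^*:\operatorname{Pic}(\bar X)\to\operatorname{Pic}(\bar Z)$, this means that each $\Delta([\bar H_i])$ lies in a fixed finitely generated subgroup $F\subseteq\operatorname{Pic}(\bar Z)$. By the N\'eron--Severi theorem, $\operatorname{NS}(\bar X)=\operatorname{Pic}(\bar X)/\operatorname{Pic}^0(\bar X)$ is finitely generated, so I may pass to an infinite subsequence on which the $[\bar H_i]$ all share the same N\'eron--Severi class; the differences $\alpha_i:=[\bar H_i]-[\bar H_1]$ are then pairwise distinct points of $\operatorname{Pic}^0(\bar X)(k)$ whose images $\Delta(\alpha_i)$ all lie in the finitely generated group $F\cap\operatorname{Pic}^0(\bar Z)$. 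A pigeonhole argument --- which in general seems to require first specialising to the case where $k$ is finitely generated over $\mathbb{Q}$, so as to apply the Lang--N\'eron theorem that $\operatorname{Pic}^0(\bar X)(k)$ is itself finitely generated --- extracts infinitely many $\alpha_i$ with common $\Delta$-image. Taking differences of these yields infinitely many distinct $k$-points of the algebraic subgroup $\ker\Delta\cap\operatorname{Pic}^0(\bar X)$, forcing it to contain a positive-dimensional abelian subvariety $A$ on which $\bar\phi_1^*$ and $\bar\phi_2^*$ agree.

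The hardest step, and the main obstacle I expect, is converting this positive-dimensional family of invariant divisor classes into an actual non-constant rational function on $X$. The idea is to dualise: via duality of abelian varieties together with the universal property of the Albanese, the inclusion $A\hookrightarrow\operatorname{Pic}^0(\bar X)$ corresponds to a non-trivial morphism $\mu:\bar X\to A^\vee$, and the equality $\bar\phi_1^*|_A=\bar\phi_2^*|_A$ ought to dualise to $\mu\circ\bar\phi_1=\mu\circ\bar\phi_2$ after recentring $\mu$ to absorb a translation. Pulling back a non-constant rational function from $A^\vee$ via $\mu$ then furnishes the required $g\in k(X)\setminus k$ with $g\phi_1=g\phi_2$. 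Making the duality step rigorous in the presence of the boundary divisors that arose when pulling back line bundles, arranging the specialisation cleanly so that the conclusion descends from the finitely generated ground field back to the original $k$, and verifying that $\mu$ is genuinely non-constant (which should follow from $\dim A>0$ via the universal property) are the delicate points that would need careful attention.
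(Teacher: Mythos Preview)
There is a genuine gap: your reduction to $Z_{\mathrm{red}}$ is not valid, and it is fatal rather than cosmetic. The conclusion $g\circ\phi_1=g\circ\phi_2$ is an equality of morphisms from the \emph{scheme} $Z$, which is strictly stronger than equality on $Z_{\mathrm{red}}$; likewise the hypothesis $(\phi_1^V)^{-1}(H)=(\phi_2^V)^{-1}(H)$ is scheme-theoretic. In the paper's principal application one takes $Z=\spec\big(A[\epsilon_1,\dots,\epsilon_m]/(\epsilon_1,\dots,\epsilon_m)^2\big)$, with $\phi_1$ induced by the inclusion $A\hookrightarrow R$ and $\phi_2$ by $a\mapsto a+\sum_i\delta_i(a)\epsilon_i$. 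Here $\phi_1$ and $\phi_2$ \emph{coincide} on $Z_{\mathrm{red}}=\spec(A)$, so after your reduction every hypersurface trivially satisfies the hypothesis and every $g$ trivially satisfies the conclusion---nothing has been proved. The actual content of the theorem in this instance (that $g$ is a $\delta_i$-constant for all $i$) lives entirely in the nilpotents. Your whole strategy of resolving to a smooth projective $\bar Z$ and invoking $\operatorname{Pic}^0$ and the Albanese presupposes a reduced, indeed smooth, source; these invariants simply cannot see the infinitesimal data, so the approach cannot reach the full statement. At best it recovers the $\sigma$-variety case of Theorem~\ref{diffjou}, where $Z=X$ is reduced.

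For comparison, the paper's proof runs in the opposite direction from what you attempt: rather than generalising the Bell--Rogalski--Sierra/Cantat divisor-class argument, it \emph{reduces} Theorem~\ref{bmt} to the differential statement Theorem~\ref{bllsm}, which handles the nilpotent structure directly via derivations. So the differential case is the core of the argument, not a corollary of it.
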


Given an integral differential $k$-algebra $(A,\delta_1,\dots,\delta_m)$, consider $X=\spec(A)$ and $Z=\spec(R)$ where $R:=A[\epsilon_1,\dots,\epsilon_m]/(\epsilon_1,\dots,\epsilon_m)^2$.
We have $\phi_1:Z\to X$ coming form the natural $k$-algebra inclusion of $A$ in $R$ and $\phi_2:Z\to X$ coming form the $k$-algebra homomorphism given by $a\mapsto a+\delta_1(a)\epsilon_1+\cdots+\delta_m(a)\epsilon_m$.
Applying Theorem~\ref{bmt} to this yields Theorem~\ref{bllsm}.
(The proof of Theorem~\ref{bmt} goes via a reduction to Theorem~\ref{bllsm}, and hence does not constitute a new proof of the latter.)

\bigskip
\section{Counterexamples in higher codimension}
\label{sect-counterex}

\noindent
The Jouanolou-type theorems discussed above assert that under certain conditions (weak orthogonality to the constants, or $\delta$-rationality) there are only finitely many hypersurfaces.
In this section we survey some constructions showing that these theorems cannot in general be extended beyond hypersurfaces.

To begin with, note that for an algebraic $D$-variety to have infinitely many $D$-hypersurfaces is equivalent to having Zariski-dense many $D$-hypersurfaces.
The algebraic counterpart of this equivalence is that there are only finitely many height~$1$ prime differential ideals  if and only if the intersection of all height $1$ prime differential ideals is nontrivial.
It is not hard to see that this equivalence is no longer true in higher codimension.
In thinking about stregthenings of Jouanalou-type theorems, therefore, one would certainly not expect to be able to conclude the existence of only finitely many $D$-subvarieties, but one might speculate that the union of the proper $D$-subvarieties is not Zariski dense.
The following terminology was introduced in~\cite{BLSM}.

\begin{definition}
\label{deltalocallyclosed}
An affine $D$-variety $(V,s)$ over an algebraically closed field $k$ is called {\em $\delta$-locally closed} if the union of all its proper $D$-subvarieties over $k$ is not Zariski dense.
\end{definition}
\begin{remark}
Algebraically, this is equivalent to saying that in the induced differential ring $(k[V],\delta)$ the intersection of all nontrivial prime differential ideals is not trivial.
Model-theoretically,  being $\delta$-locally closed means that the Kolchin generic type of $(V,s)^\sharp$ over $k$ is isolated.
See~\cite[$\S2.4$]{BLSM} for proofs of these equivalences.
\end{remark}

The higher codimension analogues of Theorems~\ref{udijan} and~\ref{bllsm}, at least in one derivation and in the finite-dimensional case, would assert that {\em $\delta$-rational implies $\delta$-locally closed} for an affine $D$-variety over the constants.
The constructions we now exhibit show that this is not the case.
More precisely, they will show the following fact, which should be contrasted with Theorem~\ref{bllsm}.

\begin{fact}
\label{counterex}
In every dimension $d\geq 3$, there are $\delta$-rational but not $\delta$-locally closed affine $D$-varieties over~$k$.
Expressed algebraically, there exists a finitely generated integral $k$-algebra $A$ of Krull dimension $d$ equipped with a derivation $\delta$ such that the constants of $\Frac(A)$ is $k$ but the intersection of the nontrivial prime differential ideals is trivial.
\end{fact}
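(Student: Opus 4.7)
The plan is to produce counterexamples via Manin-kernel constructions on non-isoconstant families of abelian varieties, expressed in the setting of Fact~\ref{counterex} where $k$ is the algebraically closed constant field and $\delta$ is a $k$-linear derivation.

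For the base case $d=3$, I would take a non-isoconstant one-parameter family of simple abelian surfaces $\pi:\mathcal{A}\to B$, with $B$ a smooth affine curve over $k$, so that the total space $\mathcal{A}$ is three-dimensional. Choose a $k$-linear derivation $\delta_B$ on $k[B]$ with $\Frac(k[B])^{\delta_B}=k$ (for instance $\partial_t$ on $k[t]$), and extend it via the Gauss--Manin connection to a $k$-linear derivation $\delta$ on $k[\mathcal{A}]$, so that $\mathcal{A}$ acquires the relative Manin $D$-group structure over $B$. Non-isoconstancy implies the generic fiber is not isogenous over $k(B)$ to any abelian variety defined over $k$; by Hrushovski's analysis of Manin kernels, its generic type is then weakly orthogonal to the constants, and together with $\delta_B$-rationality this yields $\Frac(k[\mathcal{A}])^\delta=k$, i.e.\ $\delta$-rationality. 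At the same time, for each $n\geq 1$ the relative torsion subscheme $\mathcal{A}[n]\subseteq\mathcal{A}$ is a $D$-subvariety as the kernel of the $D$-endomorphism $[n]_{\mathcal{A}/B}$; each has dimension $\dim B=1$, i.e.\ codimension $2$ in $\mathcal{A}$, so that they are not hypersurfaces and do not conflict with the Jouanolou-type finiteness of Theorem~\ref{bllsm}. Since torsion is Zariski dense in any abelian variety, $\bigcup_{n\geq 1}\mathcal{A}[n]$ is dense in every fiber of $\pi$ and hence Zariski dense in $\mathcal{A}$, so $(\mathcal{A},\delta)$ is not $\delta$-locally closed.

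For $d>3$ I would either take Cartesian products of the $d=3$ example with auxiliary affine $\delta$-rational $D$-varieties of complementary dimension (using that weak orthogonality to the constants is preserved under products in a stable theory, so $\delta$-rationality survives, while the proper $D$-subvarieties $\mathcal{A}[n]\times W$ remain Zariski dense), or replace the fibers by a non-isoconstant one-parameter family of simple abelian varieties of dimension $d-1$. The main technical obstacle is the descent step: verifying that the Gauss--Manin-induced derivation is genuinely well-defined and $k$-linear on a finitely generated $k$-algebra, and that its associated $D$-structure, after passage to a differential closure, really does recover the intended relative Manin kernel together with its torsion subschemes as honest $D$-subvarieties defined over $k$. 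This is standard in the literature through the functor $\tau$ and its relative analogue, but requires careful choices of affine charts and a verification that the torsion $D$-subvarieties are pairwise distinct and Zariski-dense in their union.
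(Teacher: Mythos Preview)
Your proposal has a real gap: a non-isoconstant abelian scheme $\pi:\mathcal{A}\to B$ does not carry a $D$-group structure lifting $\delta_B$, and without one the torsion subschemes $\mathcal{A}[n]$ have no reason to be $D$-subvarieties. Since everything is defined over the constant field $k$, a $D$-structure on $\mathcal{A}$ is simply a vector field; for $[n]:\mathcal{A}\to\mathcal{A}$ to be a $D$-morphism (so that $\mathcal{A}[n]=[n]^{-1}(0_B)$ is a $D$-subvariety) the section $s:\mathcal{A}\to T\mathcal{A}$ must be a homomorphism of $B$-group schemes. The obstruction to such a section is precisely the Kodaira--Spencer class of the family along $\delta_B$, and this is nonzero exactly because you chose the family to be non-isoconstant. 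Invoking Gauss--Manin does not help: that is a connection on the relative de~Rham bundle $R^1\pi_*\Omega^\bullet_{\mathcal{A}/B}$, not a derivation on the coordinate ring of (an affine open of) $\mathcal{A}$, and it does not produce the section you need.

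The paper's construction is designed around this very obstruction. It works not with an elliptic curve $E$ over $L=k(t)$ but with its universal vectorial extension $\widehat{E}$: the defining universal property of $\widehat{E}$ forces the extension $\tau\widehat{E}\to\widehat{E}$ to split canonically as algebraic groups, yielding a $D$-group structure on $\widehat{E}$ regardless of whether $E$ descends to the constants. It is then the Zariski density of $(\widehat{E},s)^\sharp(L^{\alg})$, coming from torsion, that after spreading out over the base curve produces the dense family of $D$-curves. Note how the dimension count changes: for an elliptic curve $\dim\widehat{E}=2$, so the spread-out total space has dimension~$3$; had you repaired your argument by passing to the universal vectorial extension of an abelian surface you would land in dimension~$5$, not~$3$.
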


\medskip
\subsection{Parametrised Manin kernels}
Manin kernels were used by both Buium and Hrushovski in their proofs of the function field Mordell-Lang conjecture.
There are several expositions of this material available, so we will be brief.
See Marker~\cite{marker} and Bertrand-Pillay~\cite{bertrandpillay} for more details on Manin kernels, and~\cite{BLLSM} for more details on how they witness Fact~\ref{counterex}.

Fix an algebraically closed field $k$ and consider the differential field $L=k(t)$ with $\delta=\frac{d}{dt}$.
Let $E$ be an elliptic curve over $L$.
(A similar construction works with abelian varieties.)
The {\em universal vectorial extension of $E$}, denoted by $\widehat E$, is a $2$-dimensional connected commutative algebraic group over $L$ equipped with a surjective morphism of algebraic groups $p:\widehat E\to E$ whose kernel is an algebraic vector group, satisfying the universal property that $p$ factors uniquely through every such extension of $E$ by a vector group.
By functoriality, the prolongation $\tau\widehat E$ inherits the structure of a connected commutative algebraic group in such a way that $\pi:\tau\widehat E\to \widehat E$ is a morphism with kernel the Lie algebra $T_0\widehat E$.
The composition $p\circ \pi:\tau\widehat E\to E$ is thus again an extension of $E$ by a vector group, and so by the universal property there is a unique morphism of algebraic groups $s:\widehat E\to\tau\widehat E$ over $L$ such that $p=p\circ\pi\circ s$.
It follows that $s$ is a section to $\pi$ and so $(\widehat E,s)$ is a $D$-group over $L$.
An important property of this $D$-group is that  $(\widehat E,s)^\sharp(L^{\alg})$ is Zariski-dense in $\widehat E$, which follows from the fact that the torsion on $\widehat E$, like that on $E$ itself, is Zariski-dense.
If we make the further assumption that $E$ is not isomorphic to an elliptic curve over $k$, that is, $E$ does not descend to the constants, then it can be shown that the field of constants of $\big(L(\widehat E),\delta\big)$ is $k$.

At this point we can forget the group structure.
Letting $V$ be an appropriate affine open subset of $\widehat E$, what we have obtained is an affine algebraic $D$-variety $(V,s)$ over $L$ of dimension $2$ for which $(V,s)^\sharp(L^{\alg})$ is Zariski-dense in $V$ while the constant field of the induced derivation $\delta$ on $L(V)$ is $k$.

Now, because $L$ is a one-dimensional function field over $k$ we can witness $V$ as the generic fibre of a morphism $X\to C$ from a three-dimensional irreducible affine algebraic variety onto an affine curve, all defined over the base field $k$.
Moreover, we can arrange it so that the section $s$ on $V$ extends to a $D$-variety structure $s_X:X\to TX$ on the total space.
(Note that as $X$ is defined over the constants the prolongation is just the tangent bundle.)
Since $k(X)=L(V)$, we have that the induced derivation on $k(X)$ has constant field $k$.
So $(X,s_X)$ is $\delta$-rational.
But $X$ has many $D$-curves.
Indeed, each $L^{\alg}$-point of $(V,s)^\sharp$ gives rise to a $D$-subvariety of $X$ over~$k$ that projects finitely onto $C$.
As the set of such points is Zariski-dense on the generic fibre $V$, the union of these $D$-curves is Zariski-dense in $X$.
Note that the $D$-curves are of codimension~$2$ in $X$.
In any case, $(X,s_X)$ is not $\delta$-locally closed and therefore witnesses Fact~\ref{counterex}.

The construction can de adjusted to produce higher Krull dimension examples -- by replacing $L$ with higher dimensional function fields though still equipped with a derivation whose constants are $k$.

\medskip
\subsection{The $j$ function}
The $j$ function is a classically known analytic function on the upper half complex plane that parametrises elliptic curves over~$\mathbb C$.
It satisfies a certain order three algebraic differential equation of the form $\delta^3x=f(x,\delta x,\delta^2 x)$ where $f$ is a rational function over $\mathbb Q$.
Its set of solutions in a model $(K,\delta)\models\dcf$ is therefore, upto generic definable bijection, of the form $(V,s)^\sharp(K)$ for some $D$-variety $(V,s)$ over $k:=\mathbb Q^{\alg}$ with $\dim V=3$.
This Kolchin closed set was studied from the model-theoretic point of view by Freitag and Scanlon in~\cite{freitagscanlon} using Pila's Ax-Lindemann-Weierstrass with derivatives theorem~\cite{pila}.
They show that it is a strongly minimal set with trivial pregeometry that is not $\omega$-categorical; a first such example.
The failure of $\omega$-categoricity is due to Hecke correspondences which ensure that for any generic $a\in (V,s)^\sharp(K)$ there are infinitely many $k(a)^{\alg}$-points in $(V,s)^\sharp$.
By strong minimality this is a Zariski-dense set in $V$.
We are lead to consider, therefore, the $6$-dimensional $D$-variety $(V^2,s^2)$.
The triviality of the pregeometry associated to $(V,s)^\sharp(K)$ ensures that $\big(k(V^2),\delta\big)$ has~$k$ as its field of constants.
That is, $(V^2,s^2)$ is $\delta$-rational.
Now, consider the first co-ordinate projection $V^2\to V$.
Fixing generic $a\in (V,s)^\sharp(K)$, each $k(a)^{\alg}$-point of $(V,s)^\sharp$, viewed as living on the fibre of $V^2\to V$ over $a$, gives rise to a $D$-subvariety of $V^2$ over~$k$ that projects finitely onto $V$.
As the set of such points is Zariski-dense on the fibre above $a$, the union of these $D$-subvarieties is Zariski-dense in $V^2$.
Notice that these $D$-subvarieties are of dimension (and codimension) $3$.
In any case, this example witnesses again Fact~\ref{counterex}.

\bigskip
\section{Analysability in the constants and $D$-groups}
\label{dgroupsect}

\noindent
The counterexamples of the previous section notwithstanding, it is reasonable to ask for which $D$-varieties do higher codimension strengthenings of the Jouanolou-type theorems hold.
One theorem along these lines from~\cite{BLSM} is:

\begin{theorem}[Bell, Le\'on S\'anchez, Moosa]
\label{dgroupdme}
For $D$-subvarieties of affine $D$-groups over the constants, $\delta$-rational does imply  $\delta$-locally closed.
\end{theorem}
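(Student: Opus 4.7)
The plan is to translate the statement into model-theoretic language and exploit the key structural fact that $(G,s)^\sharp$ is internal to the constants when $(G,s)$ is a $D$-group over $\C$.

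First I would reformulate. Write $X := (V, s|_V)^\sharp$ and let $p \in S(k)$ be its Kolchin-generic type. By the remarks following Definition~\ref{deltarational} and Definition~\ref{deltalocallyclosed}, the hypothesis of $\delta$-rationality amounts to $p$ being weakly orthogonal to $\C$, while the desired conclusion of $\delta$-local-closedness amounts to $p$ being isolated. So the task becomes: show that if $X\subseteq(G,s)^\sharp$ for a $D$-group $(G,s)$ over $\C$ and $p$ is weakly orthogonal to $\C$, then $p$ is isolated.

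The central step is to prove that the definable group $H := (G,s)^\sharp$ is internal to $\C$. Because $G$ is defined over the constants, the prolongation coincides with the tangent bundle, and a $\C$-rational homomorphic section $s : G \to \tau G = TG$ encodes a first-order linear condition on $G$ whose solution set $H$ is parametrised by $G(\C)$. In the commutative case this is concrete: for $G=\mathbb{G}_a$ with $s(x)=(x,cx)$ the set $\{a:\delta a=ca\}$ is a one-dimensional $\C$-vector space, and for $G=\mathbb{G}_m$ with $s(g)=(g,cg)$ it is a $\C^\times$-coset of $\mathbb{G}_m(K)$. The general affine case is obtained inductively through the composition series of $G$, with $s$ descending to induce coherent $D$-structures on each normal subgroup and quotient; since each piece is $\C$-internal, so is $H$.

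With $H$ internal to $\C$, the generic type of any irreducible definable subset of $H$---in particular of $X$---is also $\C$-internal. Internality provides a definable parametrisation of $X$ by an irreducible constructible subset of $G(\C)$ over some additional parameters $\bar b$, under which $p$ corresponds to the generic type of that constructible set over the algebraically closed field $\C$---a type that is plainly isolated. The hypothesis of weak orthogonality to $\C$ forces the parameters $\bar b$ to be redundant (the binding-group action of $\C$ on $p$ is trivial), so the isolation descends back to $k$.

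The main obstacle is the internality claim. The commutative case is direct linear-ODE bookkeeping, but the general noncommutative affine setting requires tracking how $s$ interacts with the composition series of $G$ and how the $D$-structure passes to subquotients. Once internality is secured, the descent to isolation is essentially formal stability theory. The use of the ambient $D$-group over $\C$ is essential: the counterexamples of Section~\ref{sect-counterex} exhibit $D$-varieties (parametrised Manin kernels, the $j$-function $D$-variety) that cannot be embedded in any $D$-group over the constants, and for those $\delta$-rational genuinely fails to force $\delta$-locally closed.
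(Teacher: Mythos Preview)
Your reformulation into model-theoretic language is correct, and the endgame you describe---that a $\C$-internal type weakly orthogonal to $\C$ is isolated because the binding group then acts transitively on its realisations---is exactly the base case the paper sketches. The gap is in your central claim that $H=(G,s)^\sharp$ is \emph{internal} to $\C$. Your justification, ``since each piece is $\C$-internal, so is $H$,'' is a non-sequitur: a tower of $\C$-internal fibrations yields only \emph{analysability} in $\C$, not internality. Internality does not in general pass up through short exact sequences of definable groups, so your composition-series induction establishes at most that $H$ is analysable in $\C$ (what the paper calls \emph{compound isotrivial}), and then the single-step binding-group argument no longer applies. (A minor additional confusion: weak orthogonality to $\C$ makes the binding-group action \emph{transitive}, not trivial; the parameters $\bar b$ do not become redundant.)

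The paper's route avoids this by working with analysability rather than internality. It first isolates the general statement (Proposition~2.13 of~\cite{BLSM}) that for any compound isotrivial $D$-variety, $\delta$-rational implies $\delta$-locally closed---equivalently, for types analysable in $\C$, weak $\C$-orthogonality implies isolation. This requires an inductive argument going beyond the one-step binding-group fact. It then shows that $D$-subvarieties of $D$-groups over the constants are compound isotrivial via the specific three-step normal series $0\leq Z\cap H_0\leq Z\leq G$, where $Z$ is the centre and $H_0=\{g:s(g)=0\}$: the bottom piece is isotrivial because $s$ vanishes there, the middle by the logarithmic-derivative theory on commutative groups over the constants, and the top by the Buium/Kowalski--Pillay theorem that a $D$-group modulo its centre is isotrivial. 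Your proposal essentially rediscovers this tower but mislabels its output as internality, and thereby skips the genuinely needed lemma that weak orthogonality implies isolation in the analysable (not just internal) case.
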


Theorem~\ref{dgroupdme} is a consequence of combining known structural results about finite rank groups definable in $\dcf$ with a more general sufficient condition on $D$-varieties for when $\delta$-rationality implies $\delta$-local-closedness.
This latter condition, in the terminology of~\cite{BLSM}, is ``compound isotriviality".

\begin{definition}
Fix a differential field $(k,\delta)$ and a $D$-variety $(V,s)$ over $k$.

We say that $(V,s)$ is {\em isotrivial} if there is some differential field extension $F\supseteq k$ such that $(V,s)$ is $D$-birationally equivalent over $F$ to a $D$-variety of the form $(W,0)$ where $W$ is defined over the constants and $0$ is the zero section.

We say that $(V,s)$ is {\em compound isotrivial} if there exists a sequence of $D$-varieties $(V_i,s_i)$ over $k$, for $i=0,\dots,\ell$, with dominant $D$-rational maps over $k$
$$
\xymatrix{
V=V_0\ar[r]^{ \ \ \ f_0}&V_1\ar[r]^{f_1}&\cdots\ar[r]&V_{\ell-1}\ar[r]^{f_{\ell-1} \ \ \ }&V_\ell=0}
$$
such that the generic fibres of each $f_i$ are isotrivial.
We say in this case that $(V,s)$ is compound isotrivial in {\em $\ell$ steps}.
\end{definition}

\begin{remark}
These notions have model-theoretic characterisations, which, indeed, are where they come from.
The model-theoretic meaning of isotriviality is that
that the Kolchin generic type of $(V,s)^\sharp$ over $k$ is {\em internal to the constants}.
Compound isotriviality says that generic type is {\em analysable in the constants}.
We point the reader to~\cite{gst} for the definitions of internality and analysability in stable theories.
There is also, of course, an algebraic characterisation in terms of the differential structure on $k[V]$, but it is not very illuminating and we leave it to the reader to work out if he desires.
\end{remark}

Stated informally, compound isotriviality is saying that the $D$-variety is built up via a finite sequence of fibrations by $D$-varieties that at least after base change are trivial vector fields.
This is a much more rich class of $D$-varieties than one may at first glance expect, as witnessed, for example, by the constructions of Jin in~\cite{jin}.

In any case, for such $D$-varieties we do have the desired strengthening of the Jouanalou-type theorems; Proposition~2.13 of~\cite{BLSM} says that {\em for a compound isotrivial $D$-variety, $\delta$-rationality implies $\delta$-local-closedness}.
Model-theoretically, this says that for types analysable in the constants, being weakly orthogonal to the constants implies isolation.
We do not expose the proof here, but it may be worth pointing out to the model-theorist why it is true in at least the base case when the $D$-variety is isotriviality: If a stationary type is internal to the constants then by $\omega$-stability we have a definable {\em binding group} action which must be transitive if the type is in addition weakly orthogonal to the constants.
As a result, such types define an orbit of a definable group action and hence are isolated.

Now, it is well known that $D$-groups over the constants, and their $D$-subvarieties, are compound isotrivial.
Indeed, given $(G,s)$ with $Z$ the center of $G$ and
$$H=\{g\in G:s(g)=0\},$$
one considers the normal sequence of $D$-subgroups
$$0\leq Z\cap H \leq Z \leq G.$$
Since the section is zero on $H$ by definition, $Z\cap H$ is isotrivial.
That $Z/(Z\cap H)$ is isotrivial follows from the theory of the logarithmic derivative homomorphism on a connected commutative algebraic group over the constants, see~\cite{marker}.
Finally, that $G$ modulo its center is isotrivial is a theorem of Buium~\cite{buium} and Pillay-Kowalski~\cite{kowalskipillay}.
We therefore have the $3$-step analysis given by the sequence of fibrations
$$G\rightarrow G/(Z\cap H)\rightarrow G/Z\rightarrow 0.$$
This will restrict to an analysis of any $D$-subvariety of $(G,s)$.
Theorem~\ref{dgroupdme} follows.

\bigskip
\section{An abstract resolution?}

\noindent
As we have noted along the way, strengthening the Jouanolou-type theorems to all codimensions requires proving that $\delta$-rational implies $\delta$-locally closed.
We know this is not the case for all $D$-varietes.
Instead of restricting our attention to certain $D$-varieties -- as we did in the previous section -- here we will describe work that finds the right condition to replace $\delta$-rationality with in order to get a characterisation of $\delta$-local-closedness.
While everything can be translated into differential algebra and differential-algebraic geometry, the condition we are after is best expressed in the language of geometric stability theory, with which we will therefore assume familiarity in this section.

Working in a saturated model $(K,\delta)\models\dcf$ with constant field $K^\delta=\C$, let us fix a type $p=\tp(a/A)$ of finite rank.
Up to definable bijection these are precisely the Kolchin generic types associated to $D$-varieties.
Local-closedness of the $D$-variety is equivalent to $p$ being isolated.
In~\cite{isolated} the following characterisation of isolation was established.

\begin{theorem}[Le\'on S\'anchez, Moosa]
\label{the1dcf}
Suppose $p=\tp(a/A)$ is of finite rank.
Then $p$ is isolated if and only if the following hold:
\begin{itemize}
\item[(i)]
$\displaystyle a\ind_A\C$; and
\item[(ii)]
$\displaystyle a\ind_{Ab}G^\sharp$ for every $b\in\acl(Aa)$ and $G$ a simple abelian variety over $\acl(Ab)$ that does not descend to the constants, where $G^\sharp$ denote the Kolchin closure of torsion in $G$; and
\item[(iii)]
 $\displaystyle a\ind_{Ab}q(K)$ for every $b\in \acl(Aa)$ and $q$ a nonisolated $U$-rank $1$ type over $Ab$ with trivial associated pregeometry.
\end{itemize}
\end{theorem}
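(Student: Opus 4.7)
The plan is to prove both directions via the Zilber trichotomy in $\dcf$, which classifies $U$-rank~$1$ types up to non-orthogonality as $\C$-internal, internal to the Manin kernel $G^\sharp$ of some simple abelian variety $G$, or of trivial pregeometry; I would combine this with the semi-minimal analysis of finite-rank types and with one core lemma about isolation and algebraic closure in $\omega$-stable theories.

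For the sufficient direction, I would induct on $U(a/A)$. A short forking calculation confirms that conditions (i)--(iii) pass from $\tp(a/A)$ to both $\tp(b/A)$ and $\tp(a/Ab)$ whenever $b\in\acl(Aa)$; for (i), $b\in\acl(Aa)$ together with $a\ind_A\C$ gives $ab\ind_A\C$, whence $a\ind_{Ab}\C$ and $b\ind_A\C$, while (ii) and (iii) are already quantified uniformly over all $b'\in\acl(Aa)$. In the base case $U(a/A)=1$, the trichotomy leaves three options; condition (i) kills the $\C$-internal case, condition (ii) kills the $G^\sharp$-internal case, so $p$ is of trivial pregeometry, and then (iii) applied with $q=p$ and $b\in\acl(A)$ forces $p$ to be isolated. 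For the inductive step, the semi-minimal analysis supplies $b\in\acl(Aa)$ with $0<U(b/A)<U(a/A)$; the induction hypothesis gives isolation of both $\tp(b/A)$ and $\tp(a/Ab)$, and the standard two-step argument (conjoining isolating formulas, then projecting) yields isolation of $\tp(a/A)$.

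For the necessary direction, the core lemma I would use is that in an $\omega$-stable theory, if $\tp(a/A)$ is isolated then $\tp(c/Ab)$ is isolated for all $b,c\in\acl(Aa)$. This follows from a slicing argument on an isolating formula $\chi(x,y,z)$ of $\tp(abc/A)$ (whose existence uses $\omega$-stability and the algebraicity of $b,c$ over $Aa$): any realisation of $\chi(\cdot,b,\cdot)$ determines an automorphism fixing $Ab$ and preserving $\tp(c/Ab)$, so $\chi(\cdot,b,\cdot)$ isolates $\tp(ac/Ab)$ and hence (by projection) isolates $\tp(c/Ab)$. Given this core lemma, if (iii) fails with $b,q,c$ as in the statement, then triviality and $U$-rank~$1$ of $q$ force $c\in\acl(Aa)$, whereupon the core lemma makes $\tp(c/Ab)=q$ isolated, contradicting the choice of $q$. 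Failures of (i) and (ii) are handled identically after observing that the generic type of $\C$ over any $A$ is nonisolated (since $\mathbb{Q}^{\alg}\subseteq\C\cap\acl(A)$ supplies infinitely many algebraic constants to avoid), and the generic type of $G^\sharp$ over $Ab$ is nonisolated (since the torsion of $G$ is Zariski-dense in $G^\sharp$ and algebraic, providing infinitely many algebraic specialisations).

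The main obstacle I foresee is the extraction step within the necessary direction for (i) and (ii): from $a\nind_{Ab}X$ with $X\in\{\C,G^\sharp\}$ one needs a single scalar $c\in X\cap\acl(Aa)\setminus\acl(Ab)$ whose type over $Ab$ is nonisolated. For the trivial $U$-rank~$1$ type $q$ in (iii) this extraction is immediate from $U$-rank-$1$ considerations, but for the algebraically nontrivial strongly minimal sets $\C$ and $G^\sharp$ it requires invoking their stable embeddedness in $\dcf$, together with elimination of imaginaries, to reduce a tuple-level forking witness to a scalar witness.
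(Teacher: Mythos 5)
Your overall architecture---the core lemma that isolation of $\tp(a/A)$ passes to $\tp(c/Ab)$ for $b,c\in\acl(Aa)$, the trichotomy to sort minimal types into the three classes of (i)--(iii), and an induction on $U$-rank for sufficiency---is the right shape, and the necessary direction is essentially sound: the step you flag (extracting a single element of $\C\cap\acl(Aa)\setminus\acl(Ab)$, resp.\ of $G^\sharp$, from a tuple-level forking witness) does go through via canonical bases together with stable embeddedness and elimination of imaginaries in the induced structure, and your nonisolation arguments for the generic types of $\C$ and $G^\sharp$ are correct. Note that the survey itself does not prove Theorem~\ref{the1dcf} directly: it derives it from the abstract criterion~($\dagger$) for isolated finite-rank types in totally transcendental theories established in~\cite{isolated}, and uses the trichotomy only to translate ($\dagger$) into (i)--(iii); your plan amounts to a direct $\dcf$-internal version of that argument.

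The genuine gap is in the inductive step of the sufficient direction. The semi-minimal analysis does \emph{not} supply $b\in\acl(Aa)$ with $0<U(b/A)<U(a/A)$; it supplies some $a_1\in\acl(Aa)\setminus\acl(A)$ with $\tp(a_1/A)$ semi-minimal, and $a_1$ may be interalgebraic with $a$ over $A$. This happens exactly when $p$ is itself semi-minimal of rank $>1$---for instance the generic type of a modular definable group with no proper infinite definable subgroups, or of a $\C$-internal torsor with no intermediate quotient visible in $\acl(Aa)$---so your induction does not bottom out at $U$-rank $1$. The true base case is ``$p$ semi-minimal,'' and there the rank-$1$ trichotomy argument is not enough: what is needed, and what the paper explicitly points to in \S\ref{dgroupsect}, is the binding group. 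If $p$ is almost internal to $\C$ or to some $G^\sharp$ and in addition weakly orthogonal to it (which is what (i) and (ii) supply), then by $\omega$-stability the binding group acts transitively on the realisations of $p$ over $\acl(A)$, so $p$ is the generic type of an orbit of a definable group action and hence isolated; almost-internality to trivial minimal types needs a separate (easier) argument from (iii) using degeneracy of the pregeometry. Without this case the proof of sufficiency is incomplete already at rank $2$. A smaller point in the same direction: in your rank-$1$ base case, condition (ii) is vacuous for $b\notin\acl(A)$ (such $b$ is interalgebraic with $a$), so you must also justify that a nontrivial locally modular minimal type over $\acl(A)$ is nonorthogonal to $G^\sharp$ for some $G$ defined already over $\acl(A)$; this is true but should be invoked explicitly.
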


Maybe a few words of explanation are in order.
First, condition~(i) is precisely weak $\C$-orthogonality of $p$ -- which, recall, captures the $\delta$-rationality of the corresponding $D$-variety.
So this theorem adds further conditions to $\delta$-rationality in order to characterise $\delta$-closedness (i.e., isolation).
Moreover, those further conditions are in the same spirit as $\delta$-rationality; conditions~(ii) and~(iii) both say that the ``fibrations" of $p$ must be weakly orthogonal to certain other (type) definable sets.
While condition~(ii) is rather concrete and in practice verifiable, condition~(iii) remains more elusive with no complete characterisation of trivial minimal types in $\dcf$ as yet known.

We have referred above to $\tp(a/Ab)$ when $b\in\acl(Aa)$ as a ``fibration" of $\tp(a/A)$.
This terminology comes from considering the special case when $b\in\dcl(Aa)$.
In that case one does obtain a rational map on the $D$-variety corresponding to $p$ and $\tp(a/Ab)$ is the Kolchin generic type of the generic fibre of that map.
As it turns out, however, there are examples showing that it does not suffice to consider only $b\in\dcl(Aa)$, the algebraic closure is necessary.

Theorem~\ref{the1dcf} itself follows from a more general characterisation of isolated finite rank types:
working in a saturated model $\U$ of an arbitrary totally transcendental theory satisfying an additional technical condition\footnote{That every complete non-locally-modular minimal type is nonorthogonal to a nonisolated minimal type over the empty set.},
a finite rank type $p=\tp(a/A)$ is isolated if and only if 
\begin{itemize}
\item[($\dagger$)]
for all $b\in \acl(Aa)$ and $q\in S(Ab)$ nonisolated and minimal, $\displaystyle a\ind_{Ab}q(\U)$.
\end{itemize}
Conditions~(i)--(iii) of Theorem~\ref{the1dcf} is how~($\dagger$) is manifested in $\dcf$ due to the form the Zilber trichotomy takes in that theory.
One obtains also analogous results for $\operatorname{CCM}$, the theory of compact complex manifolds in the langauge of analytic sets.

\bigskip
\section{An aside on the difference case}
\label{sect-diff}

\noindent
We are focusing in this paper on differential algebra, but we would like to briefly mention here an analogue of the Jouanalou-type theorems and their possible higher codimension strengthenings in the {\em difference} setting.
That is, instead of $D$-varieties we consider a {\em $\sigma$-variety}; an irreducible quasi-projective algebraic variety $V$ over an algebraically closed field $k$ equipped with a regular automorphism\footnote{It is is possible to work with dominant rational self-maps instead but the statements would have to be modified somewhat, and we stick to this context for the sake of economy of exposition.} also over~$k$, $\phi:V\to V$ .
Then $\phi$ induces a $k$-linear automorphism of the rational function field, $\sigma:k(V)\to k(V)$.
Let us say that $(V,\phi)$ is {\em $\sigma$-rational} if the fixed field of $\big(k(V),\sigma\big)$, namely the set of $f\in k(V)$ such that $\sigma(f)=f$, is just $k$ itself.
(This is often expressed by saying that $\phi$ {\em does not preserve any nonconstant fibration} on $V$ over~$k$.)
The analogy being drawn here is between derivations with their constants and automorphisms with their fixed points.

We mentioned toward the end of~$\S$\ref{sect-jou} that there is a version of the Jouanolou theorem in this context, though it was discovered much later.
The role of a differential (or $D$-) subvarieties is played now by that of a {\em $\phi$-invariant subvariety}; namely a subvariety $W\subset V$ such that $\phi(W)\subseteq W$.
There is a subtle variance here from the differential case: the subvariety $W$ need not be irreducible, nor need it break up into a union of irreducible $\phi$-invariant subvarieties.
For example, if a point $a\in V(k)$ is periodic but not fixed then it has finite but nontrivial orbit under~$\phi$, and that orbit is a $\phi$-invariant subvariety that is not the union of irreducible $\phi$-invariant subvarieties.
The following Jouanolou-type theorem is a special case of Theorem~\ref{bmt} that appeared originally in~\cite{BRS} and~\cite{cantat}, independently.

\begin{theorem}[Bell, Rogalski, Sierra; Cantat]
\label{diffjou}
Suppose $(V,\phi)$ is a $\sigma$-variety over~$k$.
If $(V,\phi)$ is $\sigma$-rational then there are only finitely many codimension one $\phi$-invariant subvarieties over $k$ on $V$.
\end{theorem}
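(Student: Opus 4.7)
My plan is to deduce Theorem~\ref{diffjou} as a direct specialisation of Theorem~\ref{bmt}. I would take $X:=V$ and $Z:=V$, with $\phi_1:=\id_V$ and $\phi_2:=\phi$; both are everywhere-defined dominant regular morphisms $V\to V$ (indeed $\phi$ is an automorphism), so the open subsets $U$ and $V$ in the hypothesis of Theorem~\ref{bmt} can be taken to be all of $V$, and the set-up of that theorem is verified without any further work.

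The first step is to check that any codimension one $\phi$-invariant subvariety $W\subseteq V$ satisfies $\phi^{-1}(W)=W$, so that $\phi_1^{-1}(W)=\phi_2^{-1}(W)$. The hypothesis $\phi(W)\subseteq W$, combined with the fact that $\phi$ is a $k$-automorphism, forces $\phi$ to permute the finitely many irreducible components of $W$: each $W_i$ maps to an irreducible closed subvariety $\phi(W_i)$ of $W$ of the same top dimension, so $\phi(W_i)$ must coincide with some component $W_j$; by injectivity of $\phi$ and finiteness of the set of components, the induced map on components is a bijection. Hence $\phi(W)=W$, and therefore $\phi^{-1}(W)=W$ as desired.

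For the second step, I would assume toward a contradiction that there are infinitely many codimension one $\phi$-invariant subvarieties on $V$ over $k$. The first step then exhibits infinitely many algebraic hypersurfaces $H$ on $V$ satisfying the pullback equation $\phi_1^{-1}(H)=\phi_2^{-1}(H)$. Applying Theorem~\ref{bmt} yields some $g\in k(V)\setminus k$ with $g\circ\phi_1=g\circ\phi_2$, that is, $\sigma(g)=g$; this contradicts the $\sigma$-rationality of $(V,\phi)$. Granted Theorem~\ref{bmt} as a black box there is essentially no real obstacle; the one point I would be careful with is the upgrade from $\phi(W)\subseteq W$ to $\phi(W)=W$ needed to match the pullback condition, which is the small automorphism/codimension argument carried out in the first step and has no direct analogue in the differential case (where $D$-subvarieties are irreducible by definition).
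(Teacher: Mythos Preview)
Your proposal is correct and is precisely the specialisation the paper has in mind: the paper does not give a proof of Theorem~\ref{diffjou} at all, but simply declares it to be ``a special case of Theorem~\ref{bmt}'', and your choice $X=Z=V$, $\phi_1=\id_V$, $\phi_2=\phi$ is exactly how that specialisation goes. The one detail you supply that the paper leaves implicit is the upgrade from $\phi(W)\subseteq W$ to $\phi^{-1}(W)=W$ via the permutation-of-components argument, and that step is fine since codimension-one subvarieties are equidimensional.
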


What about higher codimensions?
That is, if, inspired by the differential case, we were to call $(V,\phi)$ {\em $\sigma$-locally closed} if the union of all proper $\phi$-invariant subvarieties over $k$ is not Zariski dense in $V$; it would be natural to ask:

\begin{question}
\label{diffdme}
Does $\sigma$-rational imply $\sigma$-locally closed?
\end{question}

There is an interesting connection here with a conjecture in arithmetic dynamics.
Note that if $(V,\phi)$ is $\sigma$-locally closed, then any point in $V(k)$ that is outside the Zariski closure of the union of all proper $\phi$-invariant subvarieties over $k$ will have a Zariski-dense orbit under $\phi$.
Indeed, the Zariski closure of that orbit will be $\phi$-invariant and hence cannot be proper.
So, an affirmative answer to Question~\ref{diffdme} would imply the following conjecture from~\cite{medvedev-scanlon}.

\begin{conjecture}[Medvedev, Scanlon]
\label{medvedev-scanlon}
Suppose $(V,\phi)$ is a $\sigma$-variety over~$k$.
If $(V,\phi)$ is $\sigma$-rational then there exists a $k$-point with Zariski-dense orbit.
\end{conjecture}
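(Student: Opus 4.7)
The plan is to prove Conjecture~\ref{medvedev-scanlon} by addressing Question~\ref{diffdme} affirmatively, which by the discussion preceding the conjecture suffices. I would mimic the strategy that yielded Theorem~\ref{dgroupdme} in the differential setting, working inside a saturated model of $\operatorname{ACFA}_0$, the model companion of difference fields of characteristic zero, where by Chatzidakis--Hrushovski finite-rank types admit a Zilber trichotomy and a well-developed theory of internality and analysability, now with the fixed field $\operatorname{Fix}(\sigma)$ in the role the constants $\C$ play in $\dcf$. Under this dictionary, $\sigma$-rationality of $(V,\phi)$ corresponds to weak $\operatorname{Fix}(\sigma)$-orthogonality of its generic type, and $\sigma$-local closedness to isolation of that type.

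The first step is the base case, which follows the binding group argument outlined in $\S$\ref{dgroupsect}: a type internal to $\operatorname{Fix}(\sigma)$ and weakly $\operatorname{Fix}(\sigma)$-orthogonal is an orbit of a transitive definable group action, hence isolated. The next step is to establish compound isotriviality for $\sigma$-subvarieties of $\sigma$-algebraic groups defined over $\operatorname{Fix}(\sigma)$, by exhibiting a normal filtration analogous to $0\leq Z\cap H\leq Z\leq G$, with $Z$ the center and $H$ the fixed-by-$\phi$ subgroup. The delicate part here is the difference analogue of the Buium--Kowalski--Pillay theorem on connected algebraic groups modulo their centers. Together with the base case, this would resolve Question~\ref{diffdme}, and hence the conjecture, for $\sigma$-subvarieties of $\sigma$-algebraic groups over the fixed field.

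For a general $\sigma$-rational $\sigma$-variety, one would seek an analogue of Theorem~\ref{the1dcf} in $\operatorname{ACFA}_0$: a characterisation of isolation for finite-rank types via weak orthogonality of all fibrations (through algebraic closure, in the sense of ($\dagger$)) to $\operatorname{Fix}(\sigma)$, to Kolchin-like torsion-closures in simple abelian varieties not descending to the fixed field, and to trivial non-locally-modular minimal types. Establishing this characterisation, and then arguing that $\sigma$-rationality of $(V,\phi)$ forces each of the three orthogonality conditions on every fibration of its generic type, would complete the argument.

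The main obstacle, as in the differential case, is the trivial non-locally-modular minimal type condition. The $j$-function construction of $\S$\ref{sect-counterex} shows that in $\dcf$ such types produce genuine counterexamples to higher-codimension Jouanolou statements, so no purely soft stability-theoretic argument can exclude them in $\operatorname{ACFA}_0$ either. The serious work will be either to classify trivial minimal types in $\operatorname{ACFA}_0$ finely enough to show that those appearing as fibrations of a $\sigma$-rational $\sigma$-variety are themselves isolated, or, failing a positive answer to Question~\ref{diffdme} in full, to bypass it with a direct arithmetic-dynamics argument -- via heights, specialisation, or density of preperiodic points -- that produces a $k$-point of Zariski-dense orbit whenever $(V,\phi)$ is $\sigma$-rational, even through putative problematic fibres.
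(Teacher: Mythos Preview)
There is no proof in the paper for you to be compared against: the statement is recorded as an open \emph{conjecture}, known only when $k$ is uncountable (by Amerik--Campana), and the paper makes no attempt to settle it.

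More seriously, your strategy is doomed at step one. You propose to derive the conjecture from an affirmative answer to Question~\ref{diffdme}, but the paper refutes Question~\ref{diffdme} in the very next paragraph: H\'enon automorphisms $\phi:\mathbb A^2\to\mathbb A^2$ over~$\mathbb C$ have no invariant curves (so are $\sigma$-rational) yet have infinitely many periodic points whose orbits are Zariski dense (so are not $\sigma$-locally closed). Thus ``$\sigma$-rational $\Rightarrow$ $\sigma$-locally closed'' is \emph{false}, even over uncountable~$k$, and no amount of compound-isotriviality or trichotomy analysis in $\operatorname{ACFA}_0$ can establish it. The obstruction the paper isolates is not a trivial minimal type phenomenon but a dynamical one---the failure of quasi-unipotence of the induced action on divisor classes---which is precisely why the Bell--Rogalski--Sierra conjecture imposes that extra hypothesis.

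There is also a structural problem with the model-theoretic transplant you sketch. The theory $\operatorname{ACFA}_0$ is simple but not stable, and in particular not $\omega$-stable; the ``binding group action $\Rightarrow$ isolated type'' step you borrow from \S\ref{dgroupsect} relies on $\omega$-stability and has no direct analogue. Indeed, in $\operatorname{ACFA}_0$ there is no good correspondence between isolation of the generic type and $\sigma$-local-closedness of the variety, so the entire dictionary on which your plan rests ($\sigma$-rational $\leftrightarrow$ weakly $\operatorname{Fix}(\sigma)$-orthogonal, $\sigma$-locally closed $\leftrightarrow$ isolated) does not carry over. Any attack on the Medvedev--Scanlon conjecture must go through arithmetic-dynamical methods rather than a wholesale import of the $\dcf$ stability machinery.
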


When $k$ is uncountable the conjecture is known to be true by an older theorem of Amerik and Campana~\cite{amerik-campana} -- so the conjecture is really about countable algebraically closed fields, where it remains open.

Question~\ref{diffdme}, however, is {\em not} open, even for uncountable $k$: as explained in~\cite[Theorem~8.8]{BRS}, work of Jordan~\cite{jordan} on Henon automorphisms $\phi:\mathbb A^2\to\mathbb A^2$ of the affine plane over $\mathbb C$ shows that they have no invariant curves but infinitely many periodic points.
Since the level sets of a nonconstant rational function on $\mathbb A^2$ that is fixed by $\sigma$ would be invariant curves, it follows that there are no such rational functions and $(\mathbb A^2,\phi)$ is $\sigma$-rational.
On the other hand, the orbit of a periodic point is a (finite) $\phi$-invariant subvariety, but the union of these orbits as you range through the infinitely many periodic points is Zariski-dense as its Zariski closure would otherwise be an invariant curve.
Hence $(\mathbb A^2,\phi)$ is not $\sigma$-locally closed.

The obstacle here seems to be the nature of the induced action of $\phi$ on the divisors of $V$ modulo numerical equivalence.
This is a finitely generated free abelian group, and the action is said to be {\em quasi-unipotent} if all of its eigenvalues are roots of unity. 
In~\cite{keeler} it is shown that quasi-unipotence is equivalent to the existence of a ``$\phi$-ample divisor on $X$", which was the condition isolated in~\cite{BRS} as relevant to higher codimension analogues of Theorem~\ref{diffjou}.
Conjecture~8.5 of~\cite{BRS} can therefore be expressed as follows:

\begin{conjecture}[Bell, Rogalski, Sierra]
Suppose $(V,\phi)$ is a projective $\sigma$-variety over $k$ for which the action of $\phi$ on the numerical equivalence classes of divisors is quasi-unipotent.
If $(V,\phi)$ is $\sigma$-rational then it is $\sigma$-locally closed.
\end{conjecture}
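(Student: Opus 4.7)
The plan is to emulate the strategy that succeeded in the differential setting for $D$-subvarieties of $D$-groups (Theorem~\ref{dgroupdme}) by developing a difference-algebraic analogue of compound isotriviality, arguing that it is forced by the quasi-unipotence hypothesis, and then deducing $\sigma$-local-closedness from $\sigma$-rationality in the compound isotrivial case.

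First, I would introduce a notion of \emph{$\sigma$-isotriviality}: call $(V,\phi)$ $\sigma$-isotrivial if after a suitable extension of the base, it becomes $\sigma$-birational to some $(W,\mathrm{id}_W)$; and call it \emph{compound $\sigma$-isotrivial} if it admits a finite tower of $\phi$-equivariant dominant rational maps
$$V = V_0 \to V_1 \to \cdots \to V_\ell = \mathrm{pt}$$
whose generic fibres are $\sigma$-isotrivial. Model-theoretically, these should correspond to internality and analysability to the fixed field, carried out now in ACFA rather than $\dcf$.

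Second, I would aim to show that quasi-unipotence of $\phi^*$ on $N^1(V)$ implies compound $\sigma$-isotriviality. By Keeler's theorem this hypothesis is equivalent to the existence of a $\phi$-ample divisor, which controls the growth of the degrees of the iterates $\phi^n(W)$ for any subvariety $W$. This control should permit the construction of a $\phi$-equivariant Iitaka-style fibration whose generic fibre has strictly fewer positive directions in $N^1$; iterating would produce the compound isotrivial tower. Third, given compound $\sigma$-isotriviality, I would deduce $\sigma$-local-closedness from $\sigma$-rationality by induction on the length of the tower. The base case is the isotrivial one: once $(V,\phi)$ becomes trivial after base change, $\sigma$-rationality should force the twist implicit in the base extension to factor through a transitive definable group action (the difference-field analogue of the binding group), so that the proper invariant subvarieties are constrained to lie in a proper closed set. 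The inductive step would then parallel the fibration argument used in~\cite{BLSM}: $\sigma$-rationality of $(V,\phi)$ yields $\sigma$-rationality of both the base and the generic fibre, and local-closedness propagates up the tower. Theorem~\ref{diffjou} provides the codimension-one input at each stage.

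The main obstacle is the middle step, producing the fibration from quasi-unipotence. The Henon counterexample shows that this hypothesis is doing all of the real work, but exploiting it purely at the level of $N^1(V)$ to produce geometric fibrations is delicate, and one would likely need to import results from the dynamical birational geometry of Amerik--Campana, Cantat, and others. Moreover, because the statement implies the Medvedev--Scanlon Zariski-dense orbit conjecture over countable algebraically closed fields, a fully unconditional proof is presumably out of reach with current techniques; a realistic first target would be the surface and threefold cases, or the case of $\sigma$-varieties admitting a $\phi$-invariant ample class, where one already has strong structural control.
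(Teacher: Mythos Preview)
The statement you are addressing is a \emph{conjecture}, not a theorem: the paper offers no proof, only the remark that Bell--Rogalski--Sierra establish it when $\dim V\le 2$ or when $\phi$ comes from an algebraic group action. So there is nothing in the paper to compare your proposal against, and what you have written is (as you yourself acknowledge) a research outline rather than a proof.

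That said, the outline has a genuine gap at exactly the place you flag. The step ``quasi-unipotence of $\phi^*$ on $N^1(V)$ implies compound $\sigma$-isotriviality'' is not just delicate, it is unsupported: there is no known mechanism by which a numerical condition on divisor classes produces a tower of fibrations with generic fibres internal to the fixed field. In the differential setting, compound isotriviality of $D$-groups came from hard structural theorems (Buium, Kowalski--Pillay) specific to group objects, not from any divisor-theoretic hypothesis; quasi-unipotence is a very different kind of input, and there is no analogue of those structure theorems here. Your suggestion of an Iitaka-style fibration controlled by $\phi$-ampleness is appealing but speculative: $\phi$-ampleness governs degree growth of iterates, not the existence of $\phi$-equivariant rational maps with isotrivial fibres.

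Two smaller points. First, your remark that the conjecture implies Medvedev--Scanlon over countable fields overstates things: Medvedev--Scanlon is posed for \emph{arbitrary} $\sigma$-varieties, whereas this conjecture carries the quasi-unipotence hypothesis, so at best you would get the quasi-unipotent case of their conjecture. Second, the binding-group argument you invoke for the isotrivial base case is borrowed from the $\omega$-stable setting of $\dcf$; the relevant ambient theory here is $\mathrm{ACFA}$, which is simple but not stable, and the machinery of internality and binding groups behaves differently there (in particular the fixed field is a pseudofinite field, not an algebraically closed one), so that step would also require genuine new work.
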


They prove a number of cases including when $\dim V\leq 2$ or $\phi$ is part of the action of an algebraic group on $V$.

\bigskip
\section{The Dixmier-Moeglin Equivalence}

\noindent
We will now describe how the narrative we have been pursuing about $\delta$-rationality and $\delta$-local-closedness of $D$-varieties has been applied in recent years to answer some long standing questions in noncommutative and Poisson algebra.
We begin by introducing the Dixmier-Moeglin equivalence in first the classical noncommutative setting, and then its (commutative) Poisson analogue.

\medskip
\subsection{The classical Dixmier-Moeglin equivalence}
Fix an uncountable algebraically closed field $k$ of characteristic zero, and a finitely generated noetherian associative $k$-algebra, $A$, not necesarily commutative.
Suppose $P\subseteq A$ is a prime ideal.
We consider three well-studied conditions on $P$.

The first is algebraic.
Recall that the Goldie ring of fractions, $\Frac(A/P)$, is the localistion of $A/P$ at the set of all regular elements.
It is a simple artinian ring, and hence a matrix algebra over a division ring.
The center of $\Frac(A/P)$ is therefore the center of that division ring, and so a field extension of $k$.
We say that $P$ is {\em rational} if the center of $\Frac(A/P)$ is $k$ itself.\footnote{Here we are assuming $k$ is algebraically closed. In general, {\em rational} would just ask that the center be an algebraic extension of $k$.}

The second is representation-theoretic: we say that $P$ is {\em primitive} if it annhilates a simple left $A$-module.

The final condition is Zariski-topological.
As in the commutative case, the set of prime ideals of $A$, $\spec(A)$, is endowed with a Zariski topology.
We say that $P$ is {\em locally closed} if $\{P\}$ is a locally closed subset of $\spec(A)$.
Equivalently, the intersection of all prime ideals properly containing $P$ is not equal to $P$.

Note that if $A$ were commutative then these three conditions on $P$ would be equivalent; they would all say that $P$ is a maximal ideal.
In general, the algebra $A$ is said to satisfy the {\em Dixmier-Moeglin equivalence} (DME) if a prime ideal of $A$ is rational if and only if it is  primitive if and only if it is locally closed.
This terminology, and the question of which algebras satisfy the DME, arises out of the work of Dixmier and Moeglin in the early nineteen seventies on the representations of universal enveloping algebras of finite-dimensional Lie algebras (which they showed do satisfy the DME).
Following their work it has been shown that locally closed implies primitive implies rational (at least in our context of $A$ being finitely generated and noetherian and $k$ being uncountable).
So the DME problem is the question of whether rational implies locally closed.
(At this point the reader will see at least a linguistic connection to the subject matter of the previous sections.)
Several counterexample were exhibited in the late nineteen seventies by Irving and Lorenz, while for a number of other classes of algebras the DME has been established in the intervening decades.

\medskip
\subsection{The Poisson Dixmier-Moeglin equivalence}
It is often the case that there is an analogy between noncommutative rings and commutative rings equipped with additional (often differential) structure.
One such structure, arising from deformation quantization, is that of an {\em affine Poisson algebra}: a finitely generated commutative integral $k$-algebra $A$ equipped with a Lie bracket $\{,\}$ that is also a biderivation, that is, for each $x\in A$ the operators $\{x,\_\}$ and $\{\_,x\}$ are $k$-linear derivations on $A$.
By a {\em Poisson ideal} is meant an ideal $I\subseteq A$ such that $\{a,x\}\in I$ for all $a\in I$ and $x\in A$.
(The same then holds for $\{x,a\}$ by skew-symmetry.)
Let us fix a Poisson prime ideal $P$ in an affine Poisson algebra $A$, and consider the Poisson analogue of the conditions considered above.

The Poisson structure on $A$ induces a canonical Poisson structure on $A/P$ which then extends uniquely to the fraction field $\Frac(A/P)$.
By the Poisson-center of $\Frac(A/P)$ is meant the subfield $\{f\in \Frac(A/P):\{f,\_\}=0\}$, which is an extension of $k$.
The Poisson prime $P$ is said to be {\em Poisson rational} if the Poisson-center of $\Frac(A/P)$ is just $k$ itself.

We say that $P$ is {\em Poisson primitive} if for some maximal ideal $\mathfrak m$, $P$ is the largest Poisson ideal contained in $\mathfrak m$.\footnote{A more direct analogue with primitivity in the classical case would ask instead for $P$ to be the annihilator of a simple Poisson $A$-module.
The relationship between this and Poisson primitivity  is fruitfully clarified in recent work of Launois and Topley~\cite{LT}.}

Finally, $P$ is {\em Poisson locally closed} if the intersection of all Poisson prime ideals that properly contain $P$ is not equal to $P$.

An affine Poisson algebra $A$ is said to satisfy the {\em Poisson Dixmier-Moeglin equivalence} (PDME) if, for every Poisson prime ideal, these three notions coincide.
As in the classical case, it is known that Poisson locally closed implies Poisson primitive which implies Poisson rational; so that the question is really about Poisson rationality implying Poisson local-closedness.
Unlike the classical case, no counterexamples were found (prior to the work being surveyed here).
The PDME was established in several important contexts over the last twenty years, and Brown and Gordon~\cite{BrownGordon} asked explicitly whether it holds for all affine Poisson algebras.

\medskip
\subsection{Negative results}
The constructions of~$\S$\ref{sect-counterex} yield new negative results for both the DME and the PDME, which we now describe.

The counterexamples of Irving and Lorenz to the classical DME were all of infinite {\em Gelfand-Kirrilov} (GK) {\em dimension}.
This is a noncommutative analogue to Krull dimension in the sense that it agrees with Krull dimension on finitely generated commutative $k$-algebras.
For a finitely generated (noncommutative) $k$-algebra $A$ the GK-dimension can be defined as follows: choose a finite-dimensional $k$-subspace $V\subseteq A$ that contains~$1$ and a set of generators for $A$, and let $V^n$ denote the $k$-span of the set of $n$-fold products of elements of $V$.
Then $\displaystyle \operatorname{GKdim}(A):=\limsup_{n\to\infty}\frac{\log(\dim(V^n))}{\log n}$.
Intuitively, $\displaystyle \operatorname{GKdim}(A)=d<\infty$ means that $\dim(V^n)$ grows like $Cn^d$ for some positive constant $C$.
The GK-dimension does not depend on the choice of $V$.

The first finite GK-dimension algebras not satisfying the DME were found in~\cite{BLLSM}:

\begin{theorem}[Bell, Launois, Le\'on S\'anchez, Moosa]
For each integer $d\geq 4$ there exists a finitely generated noetherian associative (noncommutative) $k$-algebra of GK-dimension $d$ that does not satisfy the DME. 
\end{theorem}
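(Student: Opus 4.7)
The plan is to translate the differential-algebraic counterexamples of Fact~\ref{counterex} into noncommutative ones via an Ore extension. Fix $d\ge 4$ and apply Fact~\ref{counterex} in Krull dimension $d-1\ge 3$ to obtain a finitely generated integral commutative $k$-algebra $A$ of GK-dimension $d-1$, equipped with a $k$-linear derivation $\delta$, such that $\Frac(A)^\delta=k$ while the intersection of the nonzero prime differential ideals of $A$ is $(0)$. (Using $k=\mathbb C$, or any uncountable algebraically closed field of characteristic zero, in the parametrised Manin-kernel construction of~$\S$\ref{sect-counterex} provides such an $A$ over an uncountable base, as required by the DME setup.) Then form the skew polynomial ring $B:=A[t;\delta]$, with relation $ta-at=\delta(a)$ for $a\in A$. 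Since $\delta\neq 0$, the ring $B$ is genuinely noncommutative; it is noetherian because $A$ is, and its GK-dimension equals $\operatorname{GKdim}(A)+1=d$ via the standard $t$-degree filtration whose associated graded is the commutative polynomial ring $A[t]$.

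It remains to show that $B$ violates the DME, specifically that the prime ideal $(0)$ of $B$ is rational but not locally closed. For rationality, I would compute the center of $\Frac(B)$: writing a putative central element as a rational expression in $t$ over $\Frac(A)$, commutation with elements of $A$ (via $[t,a]=\delta(a)$) forces its $t$-coefficients to be $\delta$-constants of $\Frac(A)$, and the hypothesis $\Frac(A)^\delta=k$ then pins the element down to $k$. For failure of local-closedness, I would use the standard fact that for each prime differential ideal $P$ of $A$, the ideal $P\cdot B$ is a prime of $B$ (since $B/PB\cong (A/P)[t;\bar\delta]$ is a skew polynomial ring over a domain, hence itself a domain). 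Intersecting over all nonzero prime $\delta$-ideals $P$ of $A$ then yields
$$\bigcap_{P} P\cdot B \ \subseteq\ \Bigl(\bigcap_{P}P\Bigr)\cdot B \ =\ (0),$$
so $(0)$ is already cut out by the nonzero primes $P\cdot B$ and therefore is not locally closed in $\spec(B)$.

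The main obstacle is the center computation: skew polynomial extensions can harbour unexpectedly large centers when $\delta$ admits nontrivial invariants, so the input $\Frac(A)^\delta=k$ must be applied carefully to the filtration-by-$t$-degree argument (handling both commutation with $A$ and with $t$, and passing from the polynomial ring $A[t;\delta]$ to its Ore quotient). A secondary technical point is the prime-ideal-extension claim for Ore extensions; this is classical (Goodearl--Warfield) and requires only that $\delta$ descends to a well-defined derivation on $A/P$, which is automatic when $P$ is a $\delta$-ideal. Once these are in place, the packaging of $(A,\delta)$ into $B$ converts the Jouanolou-failure exhibited in dimension $d-1$ directly into a DME-failure in GK-dimension $d$.
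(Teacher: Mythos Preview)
Your proposal is correct and follows essentially the same route as the paper: take the differential-algebraic counterexample of Fact~\ref{counterex} in Krull dimension $d-1$, form the Ore extension $A[t;\delta]$, and show that $(0)$ is rational (center computation via $\Frac(A)^\delta=k$) but not locally closed (extension of prime $\delta$-ideals to primes of the skew ring). One small remark: it is commutation with $t$, rather than with elements of $A$, that most directly forces the $t$-coefficients of a central element to lie in $\Frac(A)^\delta$---but you already flagged the center computation as the step needing care, and the conclusion is the same.
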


\begin{proof}[Proof outline]
By Fact~\ref{counterex} there is a Krull dimension $d-1$ commutative finitely generated integral $k$-algebra $R$ equipped with a derivation $\delta$ such that the $\delta$-constants of $\Frac(R)$ is $k$ while the intersection of all nontrivial prime differential ideals in $R$ is trivial.
Let $A$ be the one-variable skew-polynomial ring $R[x;\delta]$ where $xr=rx+\delta(r)$ for all $r\in R$.
Then $\operatorname{GKdim}(A)=d$.
The fact that the $\delta$-constants of $\Frac(R)$ is~$k$ implies that the center of the Goldie ring of fractions of $A$ is $k$.
That is, $(0)$ is a rational prime ideal of $A$.
On the other hand, it can be shown that if $P\subset R$ is a prime differential ideal then $PA$ is a prime ideal of $A$.
Moreover, one observes that because the intersection of the nontrivial such $P$s are trivial, the intersection of all such $PA$ is trivial.
So $(0)$ is not locally closed.
\end{proof}

Similar methods give, also in~\cite{BLLSM}, a negative answer to Brown and Gordon's question on the PDME:

\begin{theorem}[Bell, Launois, Le\'on S\'anchez, Moosa]
\label{counterexpdme}
For each $d\geq 4$ there exists a (commutative) affine Poisson algebra of Krull dimension $d$ that does not satisfy the PDME. 
\end{theorem}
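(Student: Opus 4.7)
The plan is to mimic the skew-polynomial construction used for the classical DME counterexample, replacing the noncommutative skew product $R[x;\delta]$ with its semiclassical Poisson limit. I would first apply Fact~\ref{counterex} to obtain, for $d\geq 4$, a finitely generated commutative integral $k$-algebra $R$ of Krull dimension $d-1\geq 3$ equipped with a $k$-linear derivation $\delta$ such that the $\delta$-constants of $\Frac(R)$ equal $k$ while the intersection of the nonzero prime differential ideals of $R$ is the zero ideal. I would then form the polynomial algebra $A := R[t]$ and endow it with the Poisson bracket determined by $\{r,s\}=0$ for $r,s\in R$ and $\{t,r\}=\delta(r)$ for $r\in R$. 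The Leibniz rule extends this uniquely to a biderivation on $A$, and Jacobi reduces to the derivation property of $\delta$, so $A$ is an affine Poisson algebra of Krull dimension $d$.

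The claim is that the zero ideal $(0)\subset A$ is Poisson rational but not Poisson locally closed. For Poisson rationality, one computes from the bracket rules that
\[
\{f,g\} \;=\; \partial_t f\cdot \delta(g) \;-\; \delta(f)\cdot \partial_t g
\]
for $f,g\in R[t]$, and this formula extends to $\Frac(A)=\Frac(R)(t)$. If $f\in\Frac(A)$ is Poisson-central, then $\{f,r\}=-\delta(r)\,\partial_t f=0$ for every $r\in R$; choosing any $r\in R$ with $\delta(r)\ne 0$ (which exists because $\Frac(R)^\delta=k$) forces $\partial_t f=0$, whence $f\in\Frac(R)$. The remaining condition $\{t,f\}=\delta(f)=0$ then places $f$ in $\Frac(R)^\delta=k$, confirming that $(0)$ is Poisson rational.

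For the failure of Poisson local closedness, I would observe that for each nonzero prime differential ideal $P\subset R$, the extension $PA=P[t]$ is a nonzero Poisson ideal of $A$ (because $\delta(P)\subseteq P$ ensures the explicit bracket formula above preserves $P[t]$) and is prime since $A/PA\cong (R/P)[t]$ is an integral domain. The intersection of these nonzero Poisson primes is $\bigl(\bigcap_P P\bigr)[t]=0$ by the defining property of $R$. Hence $(0)$ is a Poisson rational prime of $A$ that is not Poisson locally closed, and $A$ witnesses the failure of the PDME in Krull dimension $d$.

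The only step requiring real care, as opposed to bookkeeping, is the extension of the Poisson-center computation from the polynomial ring $R[t]$ to its fraction field $\Frac(R)(t)$; this is handled by propagating the bracket through the Leibniz rule and observing that the formula $\{\cdot,r\}=-\delta(r)\,\partial_t(\cdot)$ persists on rational functions in $t$ over $\Frac(R)$.
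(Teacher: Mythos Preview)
Your proposal is correct and follows essentially the same construction as the paper's own proof outline: form $A=R[x]$ with the Poisson bracket $\{p,q\}=p^{\delta}q'-p'q^{\delta}$ (which is your bracket up to an overall sign), identify the Poisson-center of $\Frac(A)$ with $\Frac(R)^{\delta}=k$ to get Poisson rationality of $(0)$, and observe that each nonzero prime differential ideal $P\subset R$ extends to a nonzero Poisson prime $PA$, with trivial intersection, so $(0)$ is not Poisson locally closed. (There is a harmless sign slip in your computation of $\{f,r\}$: from your displayed formula one gets $\{f,r\}=\delta(r)\,\partial_t f$, not $-\delta(r)\,\partial_t f$, but the argument is unaffected.)
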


\begin{proof}[Proof outline]
Again, we start with the Krull dimension $d-1$ commutative finitely generated integral differential $k$-algebra $R$ given to us by Fact~\ref{counterex}.
Let $A$ be the commutative one-variable polynomial ring $R[x]$ equipped with the Poisson bracket $\{p(x),q(x)\}:=p^\delta(x)q'(x)-p'(x)q^\delta(x)$.
Here  $p^\delta(x)$ again denotes the polynomial obtained by applying $\delta$ to the coefficients of $p$, and $p'(x)$ is the formal derivative of $p$ with respect to $x$.
On shows that the Poisson-center of $\Frac(A)$ is the field of $\delta$-constants of $\Frac(R)$, which in this case is $k$.
That is, $(0)$ is Poisson rational.
Also, if $P\subset R$ is a prime differential ideal then $PA$ is a prime Poisson ideal, and again the intersection of all nontrivial such will be trivial.
Hence $(0)$ is not Poisson locally closed.
\end{proof}

\medskip
\subsection{A corrected PDME}
The model-theoretic and differential-algebraic geometric methods also provide some positive results.
First, the Jouanolou-type finiteness theorems described in~$\S$\ref{sect-jou} suggest a correction to the PDME that appears in~\cite{BLLSM}.
The key is to replace Poisson locally closed by a codimension~1 weakening:

\begin{theorem}[Bell, Launois, Le\'on S\'anchez, Moosa]
Suppose $k$ is uncountable and $A$ is a (commutative) affine Poisson $k$-algebra.
For a Poisson prime ideal $P$ of $A$ the following are equivalent:
\begin{enumerate}
\item
$P$ is Poisson rational,
\item
$P$ is Poisson primitive, and
\item
the set of Poisson prime ideals $Q\supset P$ with $\height(Q)=\height(P)+1$ is finite.
\end{enumerate}
\end{theorem}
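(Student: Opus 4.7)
My plan is to pass to the quotient $B := A/P$, an affine Poisson integral domain over $k$, and prove the three conditions equivalent for the zero ideal in $B$. Let $a_1,\dots,a_n$ generate $B$ as a $k$-algebra and set $\delta_i := \{a_i, -\}$, giving $k$-linear derivations on $B$. By the Leibniz rule, an ideal is Poisson if and only if it is stable under every $\delta_i$, and $f \in \Frac(B)$ is Poisson-central if and only if $\delta_i(f) = 0$ for all $i$. Thus $(B, \delta_1, \dots, \delta_n)$ is exactly the differential ring of Theorem~\ref{bllsm}: Poisson rationality becomes the statement that the common constant field of $\Frac(B)$ is $k$, and condition (3) becomes the statement that $B$ has only finitely many height one prime differential ideals.

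For the equivalence of (1) and (2) I will cite the classical Poisson Dixmier-Moeglin implications established in previous work of Goodearl, Oh, and Brown-Gordon: for an affine Poisson algebra over an uncountable algebraically closed field of characteristic zero, a Poisson prime is Poisson primitive if and only if it is Poisson rational. The new content of the theorem is therefore the equivalence of (1) and (3).

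The implication from (1) to (3) is an immediate application of Theorem~\ref{bllsm}: its contrapositive says that infinitely many height one Poisson primes in $B$ would furnish some $f \in \Frac(B) \setminus k$ killed by every $\delta_i$, which by the Leibniz rule is Poisson-central, contradicting (1). For the implication from (3) to (1), I argue the contrapositive. Given a Poisson-central $f = g/h \in \Frac(B) \setminus k$, I pass to the Poisson localization $B[h^{-1}]$; for each $\lambda \in k$ the element $f - \lambda = (g - \lambda h)/h$ is Poisson-central, so the principal ideal it generates in $B[h^{-1}]$ is Poisson. By Krull's Hauptidealsatz its minimal primes have height one, and the standard fact that minimal primes over a Poisson ideal are themselves Poisson promotes them to height one Poisson primes of $B[h^{-1}]$. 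These pull back to height one Poisson primes of $B$ not containing $h$, and distinct values of $\lambda$ yield distinct such primes, since otherwise one would contain $(\mu - \lambda) h$ and hence $h$. Since $k$ is infinite, we obtain infinitely many height one Poisson primes of $B$, contradicting (3).

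The main obstacle is not in the deductions above, which amount to an unpacking of definitions and a reduction to established theorems, but rather in Theorem~\ref{bllsm} itself. That is the substantial differential-algebraic input, and once it is in hand the corrected PDME follows without further serious work.
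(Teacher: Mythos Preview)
Your approach to (1)$\Rightarrow$(3) is exactly the paper's: pass to $B=A/P$, introduce the derivations $\delta_i=\{a_i,-\}$ coming from a generating set, observe that Poisson ideals coincide with $(\delta_1,\dots,\delta_n)$-differential ideals and that the Poisson-centre of $\Frac(B)$ is the common constant field, then invoke Theorem~\ref{bllsm}. Your argument for (3)$\Rightarrow$(1), via localising at $h$ and pulling back the height-one Poisson primes lying over $f-\lambda$ for varying $\lambda\in k$, is correct and is the standard easy converse; the paper does not spell it out.

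The gap is in your treatment of (1)$\Leftrightarrow$(2). You attribute the full equivalence of Poisson rational and Poisson primitive (for affine Poisson algebras over an uncountable algebraically closed field) to earlier work of Goodearl, Oh, and Brown--Gordon, but only the direction (2)$\Rightarrow$(1) was previously known. The paper is explicit that (1)$\Rightarrow$(2) is one of the two \emph{new} contributions of Bell--Launois--Le\'on S\'anchez--Moosa, with its own differential-algebraic (though not model-theoretic) proof, and that this is precisely the step where the uncountability of $k$ is used. So you cannot dispose of this implication by citation to prior literature: the result you are invoking is part of the very theorem you are trying to prove, and a complete argument must supply or reference the proof from~\cite{BLLSM} for that direction.
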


\begin{proof}[Proof outline]
The new contributions here are really that~(1) implies~(2) and~(3).
The proof that Poisson rational implies Poisson primitive is differential-algebraic but not model-theoretic and outside the main focus of the present survey.
(It is also the part that uses the uncountability of $k$.)
So we only describe how~(1) implies~(3) is obtained from the Jouanolou-type theorems.

Taking quotients we may assume that $P=(0)$ is Poisson rational.
Fixing generators $x_1,\dots,x_n$ for $A$ we consider the $k$-linear derivations $\delta_i:=\{\_,x_i\}$ on $A$.
Poisson rationality of $(0)$ implies that the field of (total) constants of $(\Frac(A),\delta_1,\dots,\delta_n)$ is~$k$.
Hence, Theorem~\ref{bllsm} implies that $(A,\delta_1,\dots,\delta_n)$ has only finitely many prime differential ideals of height $1$.
But the differential ideals of $(A,\delta_1,\dots,\delta_n)$ are precisely the Poisson ideals of $A$.
So $(0)$ satisfies condition~(3).
\end{proof}

In particular, the PDME does hold for affine Poisson algebras of Krull dimension at most $3$, and so the counterexamples of Theorem~\ref{counterexpdme} are smallest possible.

\medskip
\subsection{Positive results in the Hopf setting}
In~$\S$\ref{dgroupsect} we showed how strengthenings of the Jouanolou-type theorems to higher codimension were possible in the setting of algebraic $D$-groups.
One might expect that the methods initiated in~\cite{BLLSM} could therefore be used to translate these results into positive contributions to both the classical and Poisson DME in the context of Hopf algebras.
This was done in a pair of papers afterwards, whose main accomplishments we now describe.

We do not define Hopf algebras here, except to recall that it is additional structure on a $k$-algebra $A$ in the form of a coproduct $\Delta:A\to A\otimes_k A$, a counit $\epsilon: A\to k$, and an antipode $S:A\to A$, satisfying various properties.
In the finitely generated commutative setting, they are precisely the structure induced on $A$ from an algebraic group structure on $\spec(A)$.
The following theorem, which appeared in~\cite{BLSM}, says that the classical DME holds for Hopf Ore extensions.
It can be seen as evidence for the conjecture of Bell and Leung~\cite{BellLeung} that the DME holds for all Hopf algebras of finite GK-dimension.

\begin{theorem}[Bell, Le\'on S\'anchez, Moosa]
Suppose $R$ is a (commutative) finitely generated integral Hopf $k$-algebra, $\sigma$ is a $k$-linear automorphism of $R$, and $\delta$ is a $k$-linear $\sigma$-derivation on $R$ -- that is $\delta(rs)=\sigma(r)\delta(s)+\delta(r)s$.
Let $A=R[x;\sigma,\delta]$ be the twisted polynomial ring where $xr=\sigma(r)x+\delta(r)$ for all $r\in R$.
If $A$ admits a Hopf algebra structure extending that on $R$ then $A$ satisfies the DME.
\end{theorem}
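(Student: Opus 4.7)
The plan is to reduce the DME for the Hopf Ore extension $A = R[x;\sigma,\delta]$ to the $D$-group result Theorem \ref{dgroupdme} by exploiting the Hopf structure to realise $(R,\sigma,\delta)$ inside the differential-algebraic geometry of an algebraic $D$-group. Since locally closed $\Rightarrow$ primitive $\Rightarrow$ rational holds in general (for $k$ uncountable and $A$ finitely generated noetherian), the content is the reverse direction: every rational prime of $A$ is locally closed. Taking Hopf quotients where appropriate, I would reduce to showing that if $(0)$ is rational in $A$ then $(0)$ is locally closed.

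Next I would invoke the structure theory of Hopf Ore extensions of commutative Hopf algebras: the Hopf axioms together with commutativity of $R$ force the coproduct of $x$ to have the shape $\Delta(x) = x \otimes a + b \otimes x$ for suitable group-likes of $R$, and consequently force $\sigma$ to be a winding automorphism of the algebraic group $G := \spec R$ (translation by a $k$-point) and $\delta$ to be a translation-compatible $\sigma$-derivation. The geometric payoff is that $(G,\sigma,\delta)$ repackages, after a suitable enlargement, as an algebraic $D$-group $(H,s)$ over $k$ in such a way that $(\sigma,\delta)$-stable prime ideals of $R$ correspond to $D$-subvarieties of $(H,s)$, and primes of $A$ lying over a given $(\sigma,\delta)$-prime of $R$ are controlled via Goodearl-Letzter theory.

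Under this translation, rationality of $P$ in $A$ becomes $\delta$-rationality of the associated $D$-subvariety of $(H,s)$ in the sense of Definition \ref{deltarational}, and local-closedness becomes $\delta$-local-closedness in the sense of Definition \ref{deltalocallyclosed}. Theorem \ref{dgroupdme} then supplies the needed implication. The main obstacle is the presence of the automorphism $\sigma$: without the Hopf hypothesis, the analogous statement would demand a difference-algebraic strengthening of Theorem \ref{dgroupdme}, which remains open and is closely related to Question \ref{diffdme}. The Hopf-Ore setting is precisely what rescues the argument by forcing $\sigma$ to be a translation on $G$ and hence absorbable into the geometry of a $D$-group over the constants, so that no genuinely difference-theoretic input is required.
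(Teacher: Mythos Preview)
Your overall architecture is right---use the Hopf hypothesis to pass from $A$ to differential-algebraic geometry on $G=\spec R$ and then invoke the $D$-group result---and this matches the paper's strategy. But there is a real gap in your ``repackaging'' step. The paper's outline makes the point explicitly: even in the simplest case $\sigma=\id$, the assumption that $A=R[x;\delta]$ carries a Hopf structure extending that on $R$ does \emph{not} force $\delta$ to commute with $\Delta$. It only forces $\delta$ to be an $a$-coderivation for some group-like $a\in R$, i.e.\ $\Delta(\delta r)=\sum_j\delta r_{j,1}\otimes r_{j,2}+a\,r_{j,1}\otimes\delta r_{j,2}$. Geometrically, the section $s:G\to TG$ one obtains is not a group homomorphism, so $(G,s)$ is not a $D$-group but an ``$a$-twisted $D$-group''. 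Theorem~\ref{dgroupdme} therefore does not apply as stated, and your proposed ``suitable enlargement'' to a genuine $D$-group $(H,s)$ is doing all the work without any indication of how to build it; no such embedding is evident.

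What the paper actually does at this point---and describes as ``one of the technically challenging aspects''---is to \emph{extend} Theorem~\ref{dgroupdme} to $a$-twisted $D$-groups directly, by proving that every $D$-subvariety of such a twisted $D$-group over the constants is compound isotrivial (in at most five steps rather than three). That is the missing ingredient in your sketch. Your remarks on $\sigma$ are in the right direction (the Hopf structure does constrain it via the form of $\Delta(x)$), but the claim that a winding automorphism is ``absorbable into the geometry of a $D$-group over the constants'' is again asserted rather than argued, and in any case the twist obstruction from $\delta$ being only an $a$-coderivation persists even when $\sigma=\id$. So the difference-versus-differential issue you flag is not the real bottleneck; the $a$-twist is.
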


\begin{proof}[Proof outline]
Let us restrict our attention to the case when $\sigma=\id$ so that $\delta$ is a derivation on $R$ and $A=R[x;\delta]$ is the twisted polynomial ring we were considering before.
Let us also make the simplifying assumption that $(R,\delta)$ is a {\em differential-Hopf} algebra in the sense that $\delta$ commutes with the coproduct -- where the action of $\delta$ on $R\otimes_kR$ is given by $\delta(r\otimes s)=\delta r\otimes s+r\otimes\delta s$.
In this case $R=k[G]$ for some affine algebraic group $G$ and $\delta$ is the derivation induced by a $D$-group structure $s:G\to TG$.
Now suppose $P$ is a rational prime ideal of $A$.
Then $I=P\cap R$ is a prime differential ideal of $R$ and so $V=V(I)$ is a $D$-subvariety of $(G,s)$.
One shows that the rationality of $P$ implies the $\delta$-rationality of $V$, and hence, by Theorem~\ref{dgroupdme}, $V$ is $\delta$-locally closed.
Finally, one argues that this forces $P$ to be locally closed in~$A$.

In general, we cannot assume that $\delta$ commutes with the coproduct on $R$.
However, the assumption that $A=R[x;\delta]$ admits a Hopf algebra structure extending that on $R$ does force $\delta$ to ``almost commute" with the coproduct: for some $a\in R$,
if $\Delta(r)=\sum_jr_{j,1}\otimes r_{j,2}$ then 
$\Delta(\delta r)=\sum_j\delta r_{j,1}\otimes r_{j,2}+ar_{j,1}\otimes \delta r_{j,2}$.
If $a$ were~$1$ this would be the commuting of $\delta$ with $\Delta$.
But in general we can only ensure that $a$ is group-like in the sense that $\Delta(a)=a\otimes a$ (equivalently, $a:G\to\mathbb G_{\text m}$ is a group homomorphism).
That is, $\delta$ is an {\em $a$-coderivation} on $R$.
Geometrically this means that that $(G,s)$ is not necessarily a $D$-group, but only an ``$a$-twisted $D$-group''.
The extension of Theorem~\ref{dgroupdme} from $D$-groups to $a$-twisted $D$-groups is one of the technically challenging aspects of~\cite{BLSM}, but the approach is the same: one shows that every $D$-subvariety of an $a$-twisted $D$-group over the constants is compound isotrivial (though now in at most five, rather than three, steps) and hence $\delta$-rationality implies $\delta$-local-closedness.
\end{proof}

What about in the Poisson setting?
If we assume some added Hopf structure can we prove the DME?

\begin{question}
Suppose $A$ is an affine (commutative) Poisson $k$-algebra equipped with a Hopf algebra structure where the coproduct commutes with the Poisson bracket.\footnote{Here for $\Delta$ to commute with the Poisson bracket means that $\Delta(\{a,b\})=\{\Delta a,\Delta b\}$ where $A\otimes A$ is endowed with the Poisson bracket $\{a_1\otimes b_1,a_2\otimes b_2\}:=\{a_1,a_2\}\otimes b_1b_2+a_1a_2\otimes\{b_1,b_2\}$.}
Does $A$ satisfy the PDME?
\end{question}

This remains an open and intriguing question in general, the state of the art being the following theorem from~\cite{LLS}:

\begin{theorem}[Launois, Le\'on S\'anchez]
The answer to the above question is ``yes" if we assume in addition that the coproduct is cocommutative.
\end{theorem}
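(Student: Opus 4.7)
The plan is to imitate the strategy used for Hopf Ore extensions, now in the multi-derivation Poisson setting. Since $A$ is a finitely generated commutative cocommutative Hopf $k$-algebra in characteristic zero, $A = k[G]$ for some commutative affine algebraic group $G$, and the compatibility of the Poisson bracket with the coproduct endows $G$ with the structure of a commutative Poisson algebraic group. Fix generators $x_1,\dots,x_n$ of $A$ and set $\delta_i := \{x_i,\_\}$; then the Poisson ideals of $A$ are exactly the simultaneous $\delta_i$-differential ideals, and each $\delta_i$ determines a section $s_i : G \to \tau G = TG$ (the prolongation coincides with the tangent bundle because $G$ is defined over the constants), so that each $(G,s_i)$ is a $D$-variety over $k$.

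The Poisson-Hopf compatibility translates, for each $i$, into a Sweedler-type identity relating $\Delta$ and $\delta_i$, analogous to the $a$-coderivation identity encountered in the Hopf Ore case; cocommutativity of $\Delta$ then collapses the ``twist'' and ensures that each $(G,s_i)$ is an honest $D$-group, not merely an $a$-twisted one. Reducing to the case $P = (0)$, Poisson rationality becomes the statement that the joint field of constants of $(\Frac(A),\delta_1,\dots,\delta_n)$ is $k$ -- equivalently, that the Kolchin-generic type of the common sharp locus $\bigcap_i (G,s_i)^\sharp$ is weakly orthogonal to the constants in the ambient multi-derivation theory. The compound isotriviality of $D$-subvarieties of $D$-groups over the constants discussed in Section~\ref{dgroupsect} then applies uniformly: the normal chain $0 \le Z \cap H \le Z \le G$ is group-theoretic and, for commutative $G$, collapses to $H \le G$ with $H := \bigcap_i \{g \in G : s_i(g) = 0_g\}$; isotriviality of each layer -- via the logarithmic derivative on $G/H$ and triviality of each section on $H$ -- is derivation-independent. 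Weak orthogonality together with this simultaneous analysability then forces isolation of the generic type, which is precisely Poisson local-closedness of $(0)$. Combined with the standard directions ``Poisson locally closed $\Rightarrow$ Poisson primitive $\Rightarrow$ Poisson rational'', this delivers the PDME.

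The principal obstacle is the multi-derivation extension of Theorem~\ref{dgroupdme}: one must verify that the stability-theoretic argument -- internality plus weak orthogonality to the constants implies isolation, via transitivity of the binding-group action -- continues to function when several non-commuting derivations arising from a single Poisson bracket act simultaneously. A secondary but conceptually crucial point is to cleanly establish that cocommutativity of $\Delta$, combined with Poisson-Hopf compatibility, really does eliminate the twisting phenomenon that would otherwise obstruct a direct $D$-group analysis; this is where the cocommutativity hypothesis of the theorem is essentially used.
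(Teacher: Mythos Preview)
Your overall architecture is right --- reduce the PDME to a multi-derivation version of Theorem~\ref{dgroupdme} applied to a $\mathcal D$-group structure on the commutative algebraic group $G$ with $A=k[G]$ --- and this matches the paper's approach. But the step where you invoke cocommutativity is misdiagnosed, and this is a genuine gap.

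You claim that Poisson--Hopf compatibility yields, for each $\delta_i=\{x_i,\_\}$, an ``$a$-coderivation'' identity which cocommutativity then collapses, so that each $\delta_i$ commutes with $\Delta$ and $(G,s_i)$ is an honest $D$-group. That is not what happens. Poisson--Hopf compatibility says $\Delta(\{x_i,b\})=\{\Delta x_i,\Delta b\}$, and the right-hand side involves Poisson brackets with the Sweedler components of $x_i$, not with $x_i$ itself; unless $x_i$ is primitive there is no reason for $\delta_i$ to be a coderivation, and cocommutativity of $\Delta$ does not repair this. The role cocommutativity actually plays in the paper is structural: it forces $G$ to be commutative, hence a product of copies of $\mathbb G_a$ and $\mathbb G_m$, and \emph{this explicit structure} is what allows one to construct a finite set $\mathcal D$ of derivations that (a) has the same span as all the Hamiltonian derivations $\{\_,a\}$ and (b) genuinely commutes with $\Delta$. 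The derivations in $\mathcal D$ are not the $\{x_i,\_\}$ for an arbitrary choice of generators. Whether such a $\mathcal D$ can be found without cocommutativity is exactly the open part of the general question.

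A second point you underplay: the extension of Theorem~\ref{dgroupdme} to the $\mathcal D$-setting is itself only carried out for \emph{commutative} underlying groups in the paper. Your heuristic that the normal chain collapses to $H\le G$ is on the right track, but establishing compound isotriviality of $\mathcal D$-subvarieties in this multi-derivation framework is a real technical step, not a formality --- so commutativity of $G$ is used in both halves of the argument, not just in eliminating a twist.
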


\begin{proof}[Proof outline]
We have that $A=k[G]$ for some affine algebraic group $G$.
The assumption that the coproduct on $A$ is cocommutative means that $G$ is commutative.
There are two main steps in the proof, and both use the commutativity of~$G$.

The first step is to extend Theorem~\ref{dgroupdme} to the context of several (possibly noncommuting) derivations, $\mathcal D$.
Much of the formalism of $D$-varieties and $D$-groups goes through here, giving rise to $\mathcal D$-varieties and $\mathcal D$-groups,  except that compound isotriviality for $\mathcal D$-groups (over the total constants) is only established when the underlying algebraic group is commutative.

The next step is to find a finite set of $k$-linear derivations $\mathcal D$ on $A$ that has the same span as the derivations $\{\_,a\}$ for $a\in A$, and such that each derivation in $\mathcal D$ commutes with the coproduct on $A$.
Finding such a $\mathcal D$ in this case is relatively straightforward because of the structure of $G$; being an affine commutative algebraic group it is isomorphic to a product of additive and multiplicative tori.
Whether such $\mathcal D$ exists in general remains open.
In any case, such $\mathcal D$ is then shown to induce a $\mathcal D$-group structure on $G$.
The PDME for $A$ then follows from the extension of Theorem~\ref{dgroupdme} to $\mathcal D$-groups.
\end{proof}
\vfill
\pagebreak


\end{document}